\DeclareMathOperator{\CM}{\mathfrak{C}}
\DeclareMathOperator{\cok}{Coker}
\DeclareMathOperator{\MF}{MF}
\DeclareMathOperator{\spec}{Spec}
\DeclareMathOperator{\sym}{Sym}
\author{Eleonore Faber}
\address{
School of Mathematics, University of Leeds, LS2 9JT Leeds, UK
}
\email{e.m.faber@leeds.ac.uk}
\author{Colin Ingalls}
\address{
School of Mathematics and Statistics,
Carleton University, 
Ottawa, ON K1S 5B6,
Canada}
\email{cingalls@math.carleton.ca}
\author{Simon May}
\address{
School of Mathematics, University of Leeds, LS2 9JT Leeds, UK
}
\email{ll13s4m@leeds.ac.uk}
\author{Marco Talarico}
\address{
Department of Mathematics and Statistics,
University of Ottawa, 
Ottawa, ON, K1N 6N5,
Canada}
\email{mtala048@uottawa.ca}
\thanks{E.F.~was supported by EPSRC grant EP/W007509/1. C.I.~was supported by an NSERC Discovery Grant.
Simon May was supported by a EPSRC Doctoral Training Partnership (reference EP/R513258/1).}
\subjclass[2020]{
05E10 
13C14 
20F55 
20C30 
} 
\keywords{}
\title{Matrix Factorizations of the  discriminant of $S_n$}
\date{\today}
\begin{document}

\begin{abstract} Consider the symmetric group $S_n$ acting as a reflection group on the polynomial ring $k[x_1, \ldots, x_n]$ where $k$ is a field, such that Char$(k)$ does not divide $n!$. We use Higher Specht polynomials to construct matrix factorizations of the discriminant of this group action: these matrix factorizations are  indexed by partitions of $n$ and respect the decomposition of the coinvariant algebra into isotypical components. The maximal Cohen--Macaulay modules associated to these matrix factorizations give rise to a noncommutative resolution of the discriminant and they correspond to the nontrivial irreducible representations of $S_n$.  All our constructions are implemented in Macaulay2 and we provide several examples. We also discuss extensions of these results to Young subgroups of $S_n$. 
\end{abstract} 
\maketitle

\section{Introduction}

The classical discriminant $D(f)$ of a polynomial $f$ in one variable over a field $k$ detects whether $f$ has a multiple root. If $f$ is of degree $d$, then its discriminant can be expressed as an irreducible quasi-homogeneous polynomial in the coefficients of $f$, and $D(f)$ vanishes exactly when $f$ has a multiple root. In general, an explicit formula for $D(f)$ consists of many monomial terms (e.g., for $d=6$ the discriminant has $246$ terms), and several compact \emph{determinantal formulae} are known, that is, $D(f)$ can be written as determinant of a matrix with entries polynomials in the coefficients of $f$: the most famous determinantal formula is due to Sylvester, and there are other determinantal representations due to Bezout and Cayley, see \cite[Chapter 12, 1]{GKZ}. One can show that these matrices are equivalent in the sense that they have isomorphic cokernels, see \cite[Thm.~2.2.6]{HovinenThesis}. From a more homological point of view, making use of \emph{matrix factorizations}, these cokernels yield maximal Cohen--Macaulay (=CM)-modules of rank $1$ over the hypersurface ring defined by $D(f)$.

Now a guiding question for our investigations is: \emph{can one find other non-equivalent determinantal formulae for $D(f)$, and more generally, find other matrix factorizations of $D(f)$, and even classify them? }

In this paper we will explicitly determine several matrix factorizations of $D(f)$ that are coming from an interpretation of $D(f)$ as discriminant of the reflection group action of $S_n$ on $k^n$, in particular, our matrix factorizations will correspond to isotypical components of the coinvariant algebra. 

Before commenting on the contents of the present paper, we review some results that lead to our work. When $f$ has degree four, Hovinen studied matrix factorizations for the classical discriminant in his thesis \cite{HovinenThesis}, where he describes several non-equivalent determinantal formulae (in particular, the open swallowtail) using deformation theory and also gives a complete classification of the homogeneous rank $1$ modules of $D(f)$ \cite[Thm.~3.2.1, Thm.~4.4.7]{HovinenThesis}. In singularity theory, discriminants occur in various guises, often as so-called free divisors. Free divisors were first studied by Saito \cite{Saito80} and are hypersurfaces, whose singular locus is a CM-module over the coordinate ring. Discriminants of reflection groups have been studied from this point of view in \cite{Saito-Reflections, Arrangments}, and other discriminants include discriminants of versal deformations of several types of singularities, see \cite{BuchweitzEbelingetc} for an overview and further references.

Here we interpret the classical discriminant as the discriminant of the reflection group $S_n$ acting on $k^n$: let $G$ be any finite reflection group $G \subseteq \GL(n,k)$ acting on the vector space $k^n$. Then $G$ also acts on $S:=\mathrm{Sym}_k(k^n)$. Denote by $R:=S^{G}$ the invariant ring under the group action and further by $\mathcal{A}(G)$ the reflection arrangement in $k^n$, and by $V(\Delta)$ the discriminant in the (smooth) quotient space $k^n/G$. Note that the hypersurface $V(\Delta)$ is given by the reduced polynomial $\Delta \in R$ and is simply the projection of $\mathcal{A}(G)$ onto the quotient. Moreover, in the case of $G=S_n$ and $k=\C$, it is well-known that $V(\Delta)$ is isomorphic to the classical discriminant $V(D(f))$, where $f$ is a polynomial of degree $n$, see Section  \ref{Sub:deformdisc}.

This interpretation allows us to use representation theory, in particular the McKay correspondence (see e.g. \cite{BuchweitzMFO} for more background information and references). In \cite{BFI} a McKay correspondence was established for the discriminants $V(\Delta)$ of true reflection groups, also see \cite{BFI-ICRA} for a more leisurely account:

\begin{thm*}[Cor.~4.20 in \cite{BFI}] Let $G \subseteq \GL(n,k)$ be a true reflection group acting on $S$ and let $R=S^G$ the invariant ring,  $\Delta \in R$ be the discriminant polynomial, and $z \in S$ be the polynomial defining the reflection arrangement $\mathcal{A}(G)$. Then the nontrivial irreducible $G$-representations are in bijection with the isomorphism classes of (graded) $R/(\Delta)$-direct summands of the CM module $S/(z)$ over $R/(\Delta)$. 
\end{thm*}

Now we can state a refined version of our guiding question above: \emph{Can we write down matrix factorizations for the direct summands of $S/(z)$ explicitly and also find a geometric interpretation of them?} \\

So far, all matrix factorizations for isotypical components have been determined for the case when $G$ is a true reflection group of rank $2$, see \cite{BFI}, and for the case of the family of rank $2$ complex reflection groups $G(m,p,2)$, see \cite{May21}. For higher rank reflection groups, a complete answer is only known for the special case $S_4$ \cite[Section 6]{BFI}. There has been progress on determining the direct summands of $S/(z)$ that correspond to logarithmic (co-)residues \cite[Thm.~5.9]{BFI}. The other isotypical components have yet to be determined in general. However, in this paper we determine explicit matrix factorizations for $S/(z)$ for $G=S_n$, which may help to answer the question above. \\
The main problem in writing down the matrix factorizations is to find a suitable $R$-basis of $S/(z)$ that respects the decomposition in isotypical components. To this end we will use modifications of \emph{higher Specht polynomials}. Higher Specht polynomials themselves are a generalization of the classical Specht polynomials and were introduced by Ariki--Terasoma--Yamada \cite{ariki1997} for the groups $G(m,p,n)$, also see Terasoma--Yamada \cite{HSPSN} for the case $S_n$. These polynomials were further studied as basis for generalizations of coinvariant rings \cite{GillespieRhoades}, such as Garsia--Procesi modules (introduced in \cite{GarsiaProcesi}). They form a basis of the coinvariant algebra and are indexed by standard Young tableaux $T,P$ of shape $\lambda$, where $\lambda$ is a partition of $n$. Note that partitions of $n$ are in bijection with the irreducible representations of $S_n$.

Our main result is the following:

\begin{thm}[cf.~Theorem \ref{Big}]
Let $\Delta \in R$ be the discriminant polynomial of $G=S_n$ under the action on $S$, let $\lambda$ be a partition of $n$ and denote by $T \in \ST(\lambda)$ a standard Young tableau of shape $\lambda$. Then multiplication by $z$ on $S$ defines a matrix factorization $(z,z)$ of $\Delta$, which decomposes as
$$(z,z)= \bigoplus_{\lambda\vdash n} \bigoplus_{T\in \text{ST}(\lambda)} (z|_{M_{T}},z|_{N_{T'}}) \ , $$
where $M_T$ is the $R$-module generated by the modified higher Specht polynomials, and $N_{T'}$ is the $R$-module generated by the higher Specht modules for the conjugate tableau $T'$.
In terms of CM-modules, this decomposition can be written as
$$S/(z)=\bigoplus_{\lambda \vdash n} \bigoplus_{T \in \ST(\lambda)}M_T \cong \bigoplus_{\lambda \vdash n} \bigoplus_{T' \in \ST(\lambda)}N_{T'} \ .$$
\end{thm}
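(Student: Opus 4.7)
First, observe that $(z,z)$ is a matrix factorization of $\Delta$: for $G=S_n$ the polynomial defining $\mathcal{A}(G)$ is the Vandermonde $z=\prod_{i<j}(x_i-x_j)$, and $z^2=\Delta$ up to a nonzero scalar. By Chevalley--Shephard--Todd, $S$ is a free $R$-module of rank $n!$, so multiplication by $z$ on $S$ is an endomorphism of a free $R$-module whose square is multiplication by $\Delta$. The goal is then to exhibit the two summand systems $\{M_T\}$ and $\{N_{T'}\}$ as $R$-module direct-sum decompositions of $S$ and to check that multiplication by $z$ intertwines them.

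Next I would produce the two free $R$-bases of $S$. Higher Specht polynomials $F^P_T$, indexed by pairs $T,P\in\text{ST}(\lambda)$ with $\lambda\vdash n$, descend to a $k$-basis of the coinvariant algebra $S/S^{S_n}_+ S$; fixing $T$ and varying $P$ spans a copy of the irreducible $S_n$-representation attached to $\lambda$. Since $\sum_\lambda |\text{ST}(\lambda)|^2=n!$ matches the rank of $S$ over $R$, any lift of such a basis to $S$ gives a free $R$-basis. Setting $N_{T'}$ to be the $R$-span of the unmodified higher Specht polynomials with fixed first index $T'$, and $M_T$ to be the $R$-span of a carefully chosen family of lifts (the \emph{modified} higher Specht polynomials) with fixed first index $T$, one obtains $S=\bigoplus_{\lambda,T}M_T=\bigoplus_{\lambda,T'}N_{T'}$.

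The main step is to prove $z\cdot M_T\subseteq N_{T'}$ (and symmetrically $z\cdot N_{T'}\subseteq M_T$). Since $z$ transforms under $S_n$ by the sign character, multiplication by $z$ sends the $\lambda$-isotypic component of the coinvariant algebra into the $\lambda'$-isotypic component. The modification defining $M_T$ should be engineered precisely so that this image lands in the $R$-span of the higher Specht polynomials with first index $T'$, rather than merely in the $\lambda'$-component at large. I expect this pointwise compatibility to be the main obstacle; verifying it will likely require an explicit formula expressing $z\cdot F$ in the higher Specht basis for each modified higher Specht generator $F$, exploiting the Young symmetrizer structure that relates shape $\lambda$ to its conjugate $\lambda'$. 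Once this intertwining is established, $(z,z)$ splits block-diagonally as $(z|_{M_T},z|_{N_{T'}})$, and the CM decomposition of $S/(z)$ follows by passing to cokernels of these blocks.
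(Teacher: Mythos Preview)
Your outline matches the paper's structure, but the step you flag as the main obstacle is precisely where the paper supplies a concrete mechanism that you have not identified, and it is \emph{not} the explicit-formula route you anticipate.

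The point of using \emph{two} different Young symmetrizers --- $\varepsilon_T = \frac{f^\lambda}{n!}c_T r_T$ for the $F_T^P$ and $\sigma_T = \frac{f^\lambda}{n!}r_T c_T$ for the modified $H_T^P$ --- is the commutation identity
\[
\varepsilon_{T'}(zf) \;=\; z\,\sigma_T(f),
\]
which follows from $R(T') = C(T)$, $C(T') = R(T)$, and $\pi(z) = \mathrm{sgn}(\pi)z$ for every $\pi\in S_n$. This is the ``engineering'' you allude to, and it is forced rather than ad hoc: the modification is exactly what makes $z$ intertwine the two idempotent systems.

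Combine this with the orthogonality $\varepsilon_{T_1}\varepsilon_{T_2} = \sigma_{T_2}\sigma_{T_1} = 0$ whenever $T_1 < T_2$ in the last-letter order. Expand $zH_{T}^P = \sum_{U,W} g_{U,W}\, F_U^W$ in the higher Specht basis and apply $\varepsilon_{T'}$ to both sides. On the left, $\varepsilon_{T'}(zH_T^P) = z\,\sigma_T H_T^P = zH_T^P$, since $H_T^P = \sigma_T . x_T^P$ and $\sigma_T$ is idempotent. On the right, the orthogonality kills every term $F_U^W$ with $U \neq T'$ (the paper first treats the last-letter-minimal tableau, then transports to arbitrary $T$ by a permutation). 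Thus $zH_T^P \in N_{T'}$ with no explicit coefficient computation; the reverse inclusion $z\,N_{T'}\subseteq M_T$ is symmetric. So the key input you are missing is this pair of algebraic lemmas about Young symmetrizers, not a combinatorial expansion formula.

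One minor slip: fixing the subscript $T$ and varying the superscript $P$ does \emph{not} span an irreducible $S_n$-module; it is the other way around (fix $P$, vary $T$). The $R$-modules $M_T$ and $N_{T'}$ appearing in the theorem are free over $R$ but not $S_n$-stable, which is fine for the matrix-factorization decomposition but worth keeping straight.
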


Here we note that the higher Specht polynomials $F^P_T$ for $P,T \in \ST(\lambda)$ do not yield a direct sum decomposition of the $M_T$'s on the nose, so we will define \emph{modified Higher Specht polynomials} $H^P_T$ and show that they have the desired property (i.e., form a basis), see Theorem \ref{Thm:HBasis}. 

For our computations we used the computer algebra system Macaulay2 \cite{M2}. In Section \ref{Sec:M2} the code is described in more detail and also a link to a GitHub repository is provided.

Furthermore, we follow \cite{ariki1997} and also determine a decomposition of $S/(z)$ into isotypical components corresponding to Young subgroups $S_{n_1} \times \cdots \times S_{n_m} \leqslant S_n$, where $\sum_{i=1}^mn_i=n$, using our modified higher Specht polynomials, see Theorem \ref{Prodsub}.

We are hoping to  generalize our results to  the family of complex reflection groups $G(m,p,n)$, which would give us more examples for matrix factorizations of discriminants of pseudo-reflection groups. \\

In order to get a complete answer for the question above, one also needs to consider the exceptional reflection groups of rank $\geq 2$ (for $k=\C$ these are the $15$ groups $G_{23}, \ldots, G_{37}$ in the Shephard--Todd classification). Beyond the case of the family $G(m,p,n)$, it is not quite clear how to find an equivalent of a ``Specht basis'' so we pose the 

\begin{question}
Can one find analogues for Higher Specht polynomials for all pseudo-reflection groups $G$, that is, find a basis of the coinvariant algebra $S/R_+$ which is compatible with the decomposition of $S/R_+$ into $G$-irreducible modules?
\end{question}

The paper is structured as follows: in Section \ref{Sec:Prel} we recall basics of matrix factorizations, Young diagrams and introduce $\Delta$ as discriminant of $S_n$ acting on $k^n$. In Section \ref{Sec:Decomp} we prove our main result (Theorem \ref{Big}) about the decomposition $S/(z)$ into isotypical components, using (modified) Higher Specht polynomials. We also give an explicit description of the matrix factorization for an isotypical component in Theorem \ref{Bilinear} and close the section with examples and a description of our code in Section \ref{Sec:M2}. Finally, in Section \ref{Sec:YoungSubGroups} we generalize this decomposition to Young subgroups of $S_n$.

\section{Preliminaries} \label{Sec:Prel}

\subsection{Matrix factorizations}

Matrix factorizations were introduced by Eisenbud \cite{EisenbudHOMO} to study homological properties of hypersurface rings. Here we recall the main results that will be needed later, following the expositions in \cite{CMREP,YujiCM}.

\begin{defn}\label{Def:MatrixF}Let $B$ be a commutative ring and let $f \in B$. A \emph{matrix factorization} of $f$ is a pair $(\varphi,\psi)$ of homomorphisms between free $B$-modules of the same rank $n$, with $\varphi: F \xrightarrow{} G$ and $\psi: G \xrightarrow{} F$, such that
 \[ \psi  \varphi = f \cdot1_{F} \hspace{1cm} \text{and} \hspace{1cm} \varphi \psi = f \cdot1_{G} \ . \]
 We may choose bases for $F$, $G$, and then, equivalently, $\varphi$, $\psi$ are square matrices of size $n \times n$ over $B$, such that 
  \[ \psi \cdot \varphi = f \cdot1_{B^{n}} \hspace{1cm} \text{and} \hspace{1cm} \varphi \cdot \psi = f \cdot1_{B^{n}} \ .\] 
  \end{defn}
In the following, we will always assume that $B$ is either a regular local ring or that $B$ is a graded polynomial ring.

Recall that  a \emph{morphism of matrix factorizations} $(\varphi_1,\psi_1)$ and $(\varphi_2,\psi_2)$ of $f$ is a pair of matrices $(\alpha,\beta)$ such that the following diagram commutes:

\begin{center}
\begin{tikzcd}
B^{n_1}  \arrow[d,"\alpha"] \arrow[r,"\psi_1"]& B^{n_1} \arrow[d,"\beta"] \arrow[r,"\varphi_1"] & \arrow[d,"\alpha"] B^{n_1} \\ B^{n_2} \arrow[r,"\psi_2"] & B^{n_2} \arrow[r,"\varphi_2"] & B^{n_2}     
\end{tikzcd}
\end{center}
We say that two matrix factorizations are \emph{equivalent} if there is a morphism $(\alpha,\beta)$ in which $\alpha,\beta$ are isomorphisms. Furthermore, for two matrix factorizations $(\varphi_1,\psi_1)$ and $(\varphi_2,\psi_2)$ of $f$, their \emph{sum} is defined as
\[(\varphi_1,\psi_1) \oplus (\varphi_2,\psi_2) = \left( \begin{bmatrix} \varphi_1 & 0 \\ 0 & \varphi_2\end{bmatrix},\begin{bmatrix} \psi_1 & 0 \\ 0 & \psi_2\end{bmatrix}\right) \ . \]

With these notions, matrix factorizations of $f$ form an additive category, denoted by $\MF_B(f)$.

The main reason to consider matrix factorizations is that they correspond to maximal Cohen--Macaulay modules over a hypersurface ring: For any non-unit $f \neq 0$ in $B$ we denote by $A=B/(f)$ the hypersurface ring defining $V(f) \subseteq \spec(B)$. Let further $\CM(A)$ be the \emph{category of maximal Cohen-Macaulay modules} over the ring $A$.

\begin{thm}(Eisenbud's matrix factorization theorem, see  \cite[6.1, 6.3]{EisenbudHOMO}) \\
Assume that $B$ is a regular local ring. Let $A= B/(f)$ be as above and let $(\varphi, \psi)$ be a matrix factorization of $f$. Then the functor $\cok(\varphi,\psi) = \cok(\varphi)$ induces an equivalence of categories 
$$\underline{MF}_B(f):=MF_B(f)/\{(1,f)\} \simeq \CM(A) \ . $$ \end{thm}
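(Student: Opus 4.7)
The plan is to establish the equivalence in three steps: verify that $\cok$ is a well-defined functor into $\CM(A)$, prove essential surjectivity, and then analyze morphisms to get full faithfulness after quotienting by the trivial factorization.

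First I would check that $\cok(\varphi)$ actually lies in $\CM(A)$. Since $\varphi\psi = f\cdot 1_G$, for any $y \in G$ we have $fy = \varphi(\psi(y)) \in \varphi(F)$, so $f$ annihilates $\cok(\varphi)$ and the cokernel is naturally an $A$-module. Because $f$ is a nonzerodivisor in the regular local ring $B$ and $\psi\varphi = f\cdot 1_F$, the map $\varphi$ is injective, so
\[ 0 \to F \xrightarrow{\varphi} G \to \cok(\varphi) \to 0 \]
is a free $B$-resolution, giving $\mathrm{pd}_B(\cok(\varphi)) \leq 1$. The Auslander--Buchsbaum formula yields $\mathrm{depth}_B(\cok(\varphi)) = \dim B - 1 = \dim A$, and since depth over $B$ and $A$ agree for $A$-modules, $\cok(\varphi) \in \CM(A)$. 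Morphisms $(\alpha,\beta)$ of matrix factorizations induce morphisms on cokernels via the right commutative square, so $\cok$ is an additive functor $MF_B(f) \to \CM(A)$; the trivial factorization $(1,f)$ has cokernel zero, so the functor descends to $\underline{MF}_B(f)$.

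For essential surjectivity, I take $M \in \CM(A)$ and view it as a $B$-module. From $\mathrm{depth}_B(M) = \mathrm{depth}_A(M) = \dim A = \dim B - 1$, Auslander--Buchsbaum gives $\mathrm{pd}_B(M) = 1$, and since $M$ is $f$-torsion its $B$-rank is zero, so the two free modules in the minimal resolution $0 \to F_1 \xrightarrow{\varphi} F_0 \to M \to 0$ have equal rank $n$. From $fM = 0$ we get $f\cdot F_0 \subseteq \varphi(F_1)$, so we may lift multiplication by $f$ to a map $\psi: F_0 \to F_1$ with $\varphi\psi = f\cdot 1_{F_0}$. Then $\varphi(\psi\varphi - f\cdot 1_{F_1}) = 0$, and injectivity of $\varphi$ forces $\psi\varphi = f\cdot 1_{F_1}$, producing a matrix factorization with cokernel $M$.

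Fullness is routine: any $g: M_1 \to M_2$ lifts to a chain map between the two-term resolutions, yielding a morphism $(\alpha,\beta)$ of matrix factorizations with $\cok(\alpha) = g$. Faithfulness, which is the main obstacle, is where the quotient by $(1,f)$ enters. Suppose $(\alpha,\beta)$ induces zero on cokernels; then $\alpha(F_0) \subseteq \varphi_2(F_1')$, so there is a homotopy $s: F_0 \to F_1'$ with $\alpha = \varphi_2 s$. Setting $t = \beta - s\psi_1$ (or the symmetric variant), one verifies using $\varphi_i\psi_i = \psi_i\varphi_i = f\cdot 1$ and the nonzerodivisor property of $f$ that $\beta = s\varphi_1 + \psi_2 t'$ for a suitable lift, which exhibits $(\alpha,\beta)$ as factoring through a sum of copies of $(1,f)$. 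The bookkeeping of these homotopies, checking that $(\alpha,\beta)$ genuinely factors through trivial summands rather than merely being null-homotopic in a derived sense, is the delicate point; once done, the quotient $\underline{MF}_B(f) \to \CM(A)$ is fully faithful and essentially surjective, hence an equivalence.
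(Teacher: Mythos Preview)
The paper does not give its own proof of this theorem; it simply cites Eisenbud \cite[6.1, 6.3]{EisenbudHOMO} and moves on. So there is nothing in the paper to compare your argument against.

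That said, your sketch is the standard argument and is essentially correct. One remark on the faithfulness step, which you flag as ``delicate'': it is actually cleaner than you indicate, and you do not need the extra homotopy $t'$. In the paper's conventions, a morphism $(\alpha,\beta)$ factors through a sum of copies of $(1,f)$ precisely when there exists $s$ with $\alpha = \varphi_2 s$ and $\beta = s\varphi_1$. Once $\cok(\alpha)=0$ gives you $\alpha = \varphi_2 s$, the commutativity relation $\varphi_2\beta = \alpha\varphi_1 = \varphi_2 s\varphi_1$ together with injectivity of $\varphi_2$ forces $\beta = s\varphi_1$ on the nose, and the factorization through $(1,f)^{n_1}$ is immediate (take the identity into $(1,f)^{n_1}$ and then $s$ out). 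No further lifting or bookkeeping is required.
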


This shows that instead of  directly calculating the maximal Cohen-Macaulay modules over $A$, we can instead construct matrix factorizations of $f$.

\begin{remark} This theorem also holds in the \emph{graded case}, that is, when $B$ is a graded polynomial ring and $f$ is a homogeneous element. Then one considers the categories of graded matrix factorizations and of graded CM-modules, see e.g., \cite{YujiCM}. In this paper we implicitly work in the graded situation, although we will not care too much about the actual degrees.
\end{remark}

\subsection{Young Diagrams}\label{YD}
Here we recall basic facts about Young diagrams and representations of $S_n$, for more detail see \cite{fulton_1996}.

Consider $n \in \mathbb{N}\setminus \{0\}$. Let $\lambda$ be a partition of $n$, denoted $\lambda \vdash n$, i.e $\lambda =(\lambda_0,\ldots,\lambda_{k-1})$, such that $1 \leq k\leq n$, $0 < \lambda_{i+1}\leq \lambda_i$ and $\sum_{i}n_i=n$. 
A partition can also be represented as a \emph{Young diagram}, which is constructed in the following way: Given a partition $\lambda$ of $n$, the Young diagram associated to $\lambda$ is a collection of left justified rows of squares called cells. Enumerate the rows from $0$ to $k-1$, top to bottom, the number of cells in row $i$ is $\lambda_{i}$. The partitions uniquely determine the Young diagram so we use the same notation $\lambda$ for the partition and the Young diagram. We call a Young diagram associated to a partition of $n$, a Young diagram of size $n$.

\begin{example}

Let $n=5$ and $\lambda=(2,2,1)$ then the Young diagram is: 

\[\ydiagram{2,2,1}\]
\end{example}

\begin{defn}
A \textit{Young tableau} is a Young diagram of size $n$, where each cell contains a number from $1$ to $n$ such that each number $1$ to $n$ appears only once. A Young tableau on with an underlying Young diagram $\lambda$ is said to be of \emph{shape} $\lambda$. A Young tableau is called \textit{standard} the sequence of entries in the rows and columns are strictly increasing, and the set of standard Young tableau of shape $\lambda$ is $\ST(\lambda)$. \\
For a Young tableau $T \in \ST(\lambda)$ we write $T'$ for its \emph{conjugate tableau}, that is, $T'$ is obtained by transposing $T$ and its entries. Note that $T' \in \ST(\lambda')$, where $\lambda'$ is the conjugate partition of $\lambda$.
\end{defn}

\begin{example}

Let $n=5$ and consider the Young diagram $\lambda$ from the previous example. Then the following are Young tableau:

\[\begin{ytableau}
1&2 \\ 3& 4 \\ 5 
\end{ytableau}\hspace{2cm} \begin{ytableau}
1&3 \\ 2& 4 \\ 5
\end{ytableau}\]

These are also both standard tableau.
\end{example}
\begin{defn} \label{def:index}
Let $\lambda\vdash n$ and let $T$ be a standard tableau of shape $\lambda$. We define the \emph{word} $w(T)$ to be the sequence obtained by reading each column from bottom to top starting from the left. We write $w(T)^{i}$ for the $i$-th term in this sequence, where $i=0, \ldots, n-1$. The \emph{index} $i(T)=i(w(T))$ is inductively defined as; $i(1)=0$, if $i(k)=p$ then $i(k+1)=p$ if $k+1$ is to the right of $k$ in $w(T)$ or $i(k+1)=p+1$ if  $k+1$ is to the right of $k$ in $w(T)$. We write $i(T)$ as a tableau with the indexes in the corresponding cells. Further we define $\hat{i}(T)$ to be $i(T)$ written in non decreasing order and $|i(T)|$ to be the sum of the indexes. This notion will be needed in Section \ref{Sec:Decomp}, in particular Lemma \ref{Lem:Bilinear}.
\end{defn}

\begin{example}
\[T=\begin{ytableau}
1&2 \\ 3& 4 \\ 5 
\end{ytableau} \hspace{2cm} i(T)= \begin{ytableau}
0 &0 \\ 1& 1 \\ 2 
\end{ytableau}\]
\[\hat{i}(T)=(0,0,1,1,2) \hspace{2cm} |i(T)|=4\]
\end{example}

It is widely known that Young diagrams of size $n$, and thus partitions of $n$, are in bijection with the irreducible representations of $S_n$, where Char$(k)$ does not divide $|S_n|=n!$, see \cite[Section 4]{fultonrep} for Char$(k)=0$ and \cite[Section 10, 11]{JamesRep} for Char$(k) \nmid n!$ . We will sometimes denote the irreducible representations $V_\lambda$ of $S_n$ simply by their corresponding partitions $\lambda$.

\subsection{The action of \(S_n\) on \(k^n\)}

$S_n$ naturally acts on a finite dimensional vector  space $V$ of dimension $n$ over field $k$ where Char$(k)$ does not divide $|S_n|=n!$. The quotient variety $V/S_n$ is smooth by the theorem of Chevalley--Shephard--Todd \cite{Che55}. By fixing a basis $\{x_1,\ldots,x_n\}$ of $V$, we form the symmetric algebra of $V$,  $\sym_{k}(V) \cong k[x_1,\ldots,x_n]$. The action of $S_n$ on $V$ can be naturally extended to $S=k[x_1,\ldots,x_n]$ via $\pi \cdot f(x)=f(\pi(x))$ for $\pi \in S_n$. We denote by $R$ the invariant ring $R=S^{S_n}$. Note that we have $\spec(S)=V$ and $\spec(R)=V/S_n$. The theorem of Chevalley--Shepard--Todd also shows that $R\cong k[e_1,\ldots,e_n]$, where $e_i$ are the elementary symmetric polynomials in $x_1,\ldots,x_n$. Note that $R$ is a graded polynomial ring with $\deg e_i=i$.

Let $\mathcal{A}(S_n)$ be the set of reflecting hyperplanes of the action of $S_n$, the so-called reflection arrangement of $S_n$. Let $H\in \mathcal{A}(S_n)$ be such a hyperplane, and let $\alpha_H$ be a linear form defining $H$ in $S$.  Then
\[z= {\displaystyle \prod_{H\in \mathcal{A}(S_n)}^{} \alpha_H}=\prod_{1 \leq i < j \leq n} (x_i-x_j)\]
is a polynomial in $S$ defining the reflection arrangement of $S_n$, that is $V(z)= \bigcup_{H \in \mathcal{A}(S_n)} H$.

\begin{defn}
The discriminant polynomial of the $S_n$ action on $V$ is defined by:
\[\Delta= z^2 = {\displaystyle \prod_{H\in \mathcal{A}(S_n)}^{} \alpha_H^2}=\prod_{1 \leq i < j \leq n} (x_i-x_j)^2 \ . \]

This is defined as an element of $S$ but $\Delta$ is also invariant under the group action, see \cite[Lemma 6.44]{Arrangments}, and so can be expressed as an element of $R=k[e_1,\ldots,e_n]$. We note that $R/(\Delta)$ is a hypersurface ring and from Chevalley's theorem, $S$ is a free $R$-module of rank $|S_n|=n!$. 

Let $(R_+)$ be the ideal generated by $\sigma_1,\ldots ,\sigma_n$ in $S$ and let $S/(R_+)$ the \emph{coinvariant algebra}. The structure of $S$ as a graded free $R$-module is given by Chevalley's theorem \cite{Che55}, Chevalley assumes field of characteristic $0$ but the result holds more generally for characteristic $k$ not dividing $|S_n|$, see  \cite[Chapter 5, Section 2, Theorem 2]{frenchChev}. As a graded $R$-module $S$ can be decomposed as:
\[S\cong S/(R_+)\otimes_{k} R\]

Denote the set of irreducible representations $V_\lambda$ of $S_n$ by $\mathrm{irrep}(S_n)$. The $R$-module $S/(R_+)$ carries the regular representation, in particular:

\[S/(R_+)\cong\bigoplus_{V_\lambda \in \text{irrep}(S_n)}V_\lambda^{\dim V_\lambda} \ . \]

We thus denote the $\lambda$-direct summand (the \emph{$\lambda$-isotypical component}) of $S$ by $S_\lambda=V_\lambda^{\dim V_\lambda} \otimes_k R$. 
The polynomial $z$ is the relative invariant for the determinantal representation, see  \cite[Theorem 6.37]{Arrangments} (This was proved in \cite[Theorem 3.1]{stanley} for characteristic $0$). That is, $z$ generates the direct summand of $S/(R_+)$ corresponding to $V_{\det}=V_{\lambda}$ where $\lambda$ is given by the Young diagram

\[T=\ytableausetup{mathmode, boxsize=1.5em}
\begin{ytableau}
\,\\
\,\\
\none[\scriptstyle \vdots]
\\
\, \\
\,
\end{ytableau}\]

Note that if $a \in S_{\lambda}$, then $za \in S_{\lambda \otimes \det}$.
In the following we always denote $\lambda \otimes \det$ by $\lambda'$ and note that $\lambda'$ corresponds to the conjugate representation $V_{\lambda'}$ of $V_\lambda$. 

Recalling Definition \ref{Def:MatrixF}, we have that multiplication by $z$ induces the matrix factorization  $(z,z)$ over $R$ of $\Delta$: 
\[
\begin{tikzcd}
S  \arrow[d,"\begin{rotate}{-90}$\cong$\end{rotate}"] \arrow[r,"z"]& S \arrow[d,"\begin{rotate}{-90}$\cong$\end{rotate}"] \arrow[r,"z"] & \arrow[d,"\begin{rotate}{-90}$\cong$\end{rotate}"] S \\ 
R^{n!} \arrow[r,"z"] & R^{n!} \arrow[r,"z"] & R^{n!}     
\end{tikzcd} \] 
This matrix factorization of $\Delta \in R$ corresponds to the maximal Cohen--Macaulay module $\cok(z)=S/(z)$. It was shown in \cite{BFI} that  $\End_{R/(\Delta)}(S/(z))$ has global dimension $n$ and $S/(z)$ is a faithful $R/(\Delta)$ module and so is a \emph{noncommutative resolution} of the discriminant and that the direct summands of $S/(z)$ correspond to the nontrivial irreducible representations of $S_n$. 

\end{defn}

\subsection{Discriminants of reflection groups and discriminants of deformations (over $k=\C$)} \label{Sub:deformdisc}

Here we briefly comment on the connection between the classical discriminant of a polynomial (as discussed in the introduction) and discriminants of reflection groups: let $k=\C$ and let $G\subseteq \GL(n,k)$ be a finite complexified Coxeter group. That is, $G$ is of type $A_k, B_k, D_k, E_6, E_7, E_8, I_2(p), F_4, H_3$, or $H_4$, see e.g.~\cite{Humphreys} for the classification. Then Arnol'd has shown that the discriminant of the reflection group $G$ in $\C^n/G$ is isomorphic to the discriminant of a semi-universal deformation of the singularity of the same type, see \cite{Arnold} for type ADE, and \cite{ArnoldVarchenkoGuseinZadebook1} for more details. Since $S_n$ in its reflection representation corresponds to the Coxeter group $A_{n-1}$, our discriminant $V(\Delta)$ is isomorphic to the discriminant of the semi-universal deformation of an $A_{n-1}$-singularity. A semi-universal deformation of the singularity $k[x]/(x^n)$ is given by
$$F=x^n + a_{n-2}x^{n-2}+ \cdots + a_1x +a_0 \ .$$
The discriminant of $F$ is the classical discriminant of a polynomial of degree $n$.     

 For a concrete example, look at the correspondence for $n=3$: Consider a cubic monic polynomial $f(x)=x^3+ax^2+bx+c$ with $a,b,c \in k$. Using Sylvester's formula, one calculates that the discriminant $D(f)$ is given as
$$D(f)(a,b,c)=-4a^3c + a^2b^2 + 18abc - 4b^3 - 27c^2 \ .$$
$D(f)$ is a quasi-homogeneous polynomial in $k[a,b,c]$ with $\deg(a)=1, \deg(b)=2, \deg(c)=3$. Moreover, one can always achieve $a=0$, and then the discriminant is of the well-known form
$$D(f)(b,c)=4b^3+27c^2 \ . $$
On the other hand, we calculate the discriminant $\Delta$ of the action of $S_3$ on $k^3$ resp. $k[x_1,x_2,x_3]$ from its Saito matrix (see \cite{Saito-Reflections} for Coxeter groups and \cite{Arrangments} for complex reflection groups): the Saito matrix is given as $JJ^T$, where $J$ is the Jacobian matrix of the basic invariants. In the case of $S_3$, we can take the power sums $s_i=\sum_{j=1}^3x_j^i$, $i=1,2,3$ for the basic invariants and then $J=(\frac{\partial s_i}{\partial x_j})_{i,j=1,\ldots, 3}$ (up to multiplication with a constant)
$$JJ^T=\begin{pmatrix} 1 & 1& 1 \\ x_1 & x_2 & x_3 \\ x_1^2 & x_2^2 & x_3^2 \end{pmatrix}\begin{pmatrix} 1 & x_1 & x_1^2 \\ 1 & x_2 & x_2^2 \\ 1 & x_3 & x_3^2 \end{pmatrix} = \begin{pmatrix} 3 & s_1 & s_2 \\ s_1 & s_2 & s_3 \\ s_2 & s_3 & s_4 \end{pmatrix} \ . $$
One calculates $s_4=\frac{1}{6}(s_1^4-6s_2s_1^2+3s_2^2+8s_3s_1)$ and further 
$$\Delta=\det(JJ^T)=3s_2s_1^4 - 7s_1^2s_2^2 + 12s_1s_2s_3 + s_2^3 - 6s_3^2 - 1/3s_1^6 - 8/3s_3s_1^3 \ .$$
A coordinate change shows (and restricting to the invariant hyperplane $s_1=x_1+x_2+x_3=0$) shows that this defines the same curve as $D(f)$, namely the cusp $\Delta=4s_2^3+27s_3^2$ in $R=k[x_1,x_2,x_3]^{S_3}\cong k[s_2,s_3]$. 

\section{Decomposition of $(z,z)$ for $S_n$}\label{Sec:Decomp}

During this section fix $n \geq 3$. We consider the decomposition of the coinvariant algebra $S/(R_+)$ and the multiplication of $z$ restricted to each isotypical component $S_\lambda$, where each $\lambda$ corresponds to a Young tableau. Basis elements for the isotypical components $S_\lambda$ are then given by Higher Specht polynomials \cite{ariki1997}, we follow the definitions as in loc.~cit. However, for our purposes we will define a modification of these polynomials, see Definition \ref{def:HSP}.

\begin{defn}
Let $\lambda$ $\vdash n$ and  $T_1,T_2 \in \ST(\lambda)$. We define the \emph{Last Letter Ordering (LL)} in the following way. Let $1\leq k \leq n$ be the largest integer that is written in a different position for both tableaux $T_1$ and $T_2$. If the row in which $k$ appears in $T_2$ is above the row it appears in $T_1$, then we say $T_1<T_2$.
\end{defn}

\begin{example}

Let $n=5$, Consider the following two tableaux $T_1$ and $T_2$ on the partition $(3,2)$.

\[T_1=\begin{ytableau}
    1 & 2 & 4 \\
    3 & 5\\
    \end{ytableau}
     <
    \begin{ytableau}
    1 & 3 & 4\\
    2 & 5\\
    \end{ytableau}=T_2\]

The set of elements that are in different positions is $\{2,3\}$ thus the maximal element which has a different position is $3$. Note that in $T_1$ the element 3 is written in the second row, which is below the first row where $3$ is written in $T_2$.

\end{example}
\begin{defn}
Given a Young tableaux $T$ of shape $\lambda$, we define two subgroups of $S_n$, the \emph{Row Stabilizer} $R(T)$ which are all elements of the group ring $kS_n$ that permute elements within the same row, and similarly the \emph{Column Stabilizer} $C(T)$ which permutes elements within the same columns of $T$. With these subgroups we define the following
\begin{align*}
r_T = \sum_{\pi \in R(T)}\pi &&c_T = \sum_{\rho \in C(T)}\sgn(\rho) \rho \ .
\end{align*}
Lastly we define the \emph{Young Symmetrizers}
\[\e_T = \frac{f^\lambda}{n!}c_Tr_T \hspace{1cm} \text{and} \hspace{1cm} \s_T = \frac{f^\lambda}{n!}r_Tc_T\] where $f^\lambda$ is the number of standard tableau of shape $\lambda$. These are both idempotents of $kS_n$.
\end{defn}

 In the following we  use multi-index notation $x_{w(T)}^{i(P)}=x_{w(T)^0}^{w(i(P))^0}\cdots x_{w(T)^{n-1}}^{w(i(P))^{n-1}}$.
    
    \begin{defn} \label{def:HSP}
    Let $T,P$ be two standard Young tableaux of shape $\lambda$. The \emph{higher Specht polynomials} are defined as
    \[
        F_T^P = \e_T.x_T^P  = \e_T.x_{w(T)}^{i(P)}\ ,
    \]
    We further define the \emph{modified higher Specht polynomials} as 
    $$ H_T^P = \s_T.x_T^P \ . $$
    
    \end{defn}

\medskip

\begin{defn}
For a tableaux $ P \in \ST(\lambda)$ let $M^P$ the $R$-submodule of $S/(R_+)$  generated by $\{H_T^P \st T \in \ST(\lambda)\}$ and $N^P$ the $R$-submodule of $S/(R_+)$ generated by $\{F_T^P \,|\, T \in \ST(\lambda)\}$.
\end{defn}

\begin{thm}\cite[Theorem 1, (2)]{ariki1997}
Let $P \in \ST(\lambda)$, then $M^P$ is a $S_n$-subrepresentation of $S$ isomorphic to the irreducible representation corresponding to $P$. 
\end{thm}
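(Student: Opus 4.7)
Fix a base standard tableau $T_0 \in \ST(\lambda)$. The plan is to exhibit $M^P$ as the image of an $S_n$-equivariant embedding of the irreducible representation $V_\lambda$ into $S/(R_+)$. From the classical theory of Young symmetrizers \cite{fultonrep}, the left ideal $L := kS_n \cdot s_{T_0}$ inside the group algebra is a model of the irreducible $V_\lambda$. I would define
\[
\phi : L \longrightarrow S/(R_+), \qquad a \cdot s_{T_0} \longmapsto a \cdot s_{T_0} \cdot x_{T_0}^P = a \cdot H_{T_0}^P \pmod{(R_+)}.
\]
This is well-defined (if $a s_{T_0} = 0$ in $kS_n$ then $a \cdot H_{T_0}^P = 0$ in $S$) and $S_n$-equivariant, since the left $S_n$-action on $L \subseteq kS_n$ is by construction compatible with the $S_n$-action on $S$.

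Next, I would show $M^P \subseteq \text{Im}(\phi)$. For any $\sigma \in S_n$, the conjugation identities $R(\sigma T_0) = \sigma R(T_0) \sigma^{-1}$ and $C(\sigma T_0) = \sigma C(T_0) \sigma^{-1}$ yield $\sigma \cdot s_{T_0} = s_{\sigma T_0} \cdot \sigma$ in $kS_n$, while $w(\sigma T_0) = \sigma \circ w(T_0)$ together with the invariance of the exponent sequence $w(i(P))$ gives $\sigma \cdot x_{T_0}^P = x_{\sigma T_0}^P$. Combining these, $\sigma \cdot H_{T_0}^P = H_{\sigma T_0}^P$. For any $T \in \ST(\lambda)$, choosing $\sigma \in S_n$ with $\sigma T_0 = T$ then gives $H_T^P = \phi(\sigma \cdot s_{T_0}) \in \text{Im}(\phi)$, so $M^P \subseteq \text{Im}(\phi)$.

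To finish, one must verify that $\phi$ is nonzero, equivalently that $H_{T_0}^P \neq 0$ in $S/(R_+)$, because then the irreducibility of $L$ forces $\phi$ to be injective and $\text{Im}(\phi) \cong V_\lambda$. Setting $d := |\ST(\lambda)| = \dim V_\lambda$, a dimension count finishes the argument: $M^P$ is spanned by $d$ elements and is contained in the $d$-dimensional irreducible $\text{Im}(\phi)$, so once we know $M^P \neq 0$ we must have $M^P = \text{Im}(\phi) \cong V_\lambda$. The main obstacle is precisely the non-vanishing of $H_{T_0}^P$ in the coinvariant algebra; I expect to handle this by invoking the subsequent basis result (Theorem \ref{Thm:HBasis}), which establishes that the modified higher Specht polynomials $\{H_T^P\}$ form a $k$-basis of $S/(R_+)$. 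Alternatively, one can transfer non-vanishing from the Ariki--Terasoma--Yamada result on the unmodified polynomials $F_T^P$ via a direct algebraic identity relating $H_T^P$ to $F_T^P$ through multiplication by the row and column symmetrizers $r_T$ and $c_T$; everything else is Young-symmetrizer bookkeeping.
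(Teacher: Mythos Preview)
The paper does not prove this statement itself; it is quoted as \cite[Theorem~1,~(2)]{ariki1997} and left without argument. Your proposal therefore goes beyond what the paper attempts.

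Your strategy---realize $M^P$ as the image of the $S_n$-equivariant evaluation map $\phi\colon kS_n\,\s_{T_0}\to S/(R_+)$, $a\,\s_{T_0}\mapsto a\cdot H_{T_0}^P$, and use irreducibility of the domain---is sound, and the identities $\sigma\,\s_{T_0}=\s_{\sigma T_0}\,\sigma$ and $\sigma\cdot x_{T_0}^P=x_{\sigma T_0}^P$ are correct. Two points need repair, however. First, the ``dimension count'' does not close the argument: knowing that $M^P$ is spanned by $d$ elements and sits inside the $d$-dimensional $\mathrm{Im}(\phi)$ does not force equality; you still need the $H_T^P$ for $T\in\ST(\lambda)$ to span all of $\mathrm{Im}(\phi)$. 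This follows once one knows that $\{\pi_i\,\s_{T_0}:\pi_iT_0=T_i,\ T_i\in\ST(\lambda)\}$ is already a $k$-basis of $kS_n\,\s_{T_0}$, which is \cite[Lemma~5]{ariki1997} (the paper invokes exactly this in the proof of Theorem~\ref{Thm:HBasis}); applying $\phi$ to that basis gives $M^P=\mathrm{Im}(\phi)$.

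Second, your appeal to Theorem~\ref{Thm:HBasis} for non-vanishing misreads that theorem: it asserts $N^P\cong M^P$, not that the $H_T^P$ form a $k$-basis of $S/(R_+)$. You can still extract $M^P\neq 0$ from it via $N^P\neq 0$ (Theorem~\ref{Thm:TerasomaYamada}), but then the combination of $N^P\cong V_\lambda$ (which is what Ariki--Terasoma--Yamada actually prove for the unmodified $F_T^P$) with $N^P\cong M^P$ already yields $M^P\cong V_\lambda$ outright, making the rest of your construction redundant. If you want a self-contained non-vanishing argument for $H_{T_0}^P$, the ``direct algebraic identity relating $H_T^P$ to $F_T^P$'' you gesture at is not made precise and is not obvious; the honest route is to adapt the leading-term or bilinear-form computation from \cite{ariki1997} to the symmetrizer $\s_T$ in place of $\e_T$.
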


\begin{thm} \label{Thm:HBasis}
Let $P \in \ST(\lambda)$, then $N^P$ is a $S_n$-subrepresentation of $S$ isomorphic to $M^P$.
\end{thm}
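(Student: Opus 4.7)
The strategy is to show $N^P$ carries an $S_n$-representation isomorphic to $V_\lambda$, which together with the preceding theorem identifying $M^P$ with a copy of $V_\lambda$ yields $N^P \cong M^P$. The proof mirrors the Ariki--Terasoma--Yamada argument for $M^P$, using that $e_T = \tfrac{f^\lambda}{n!} c_T r_T$ is a primitive idempotent in $kS_n$ analogous to $s_T = \tfrac{f^\lambda}{n!} r_T c_T$: both have left ideals isomorphic to the irreducible representation $V_\lambda$.

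First, I would argue $F_T^P = e_T \cdot x_T^P \neq 0$ for every $T, P \in \ST(\lambda)$. The direct route is to compute the leading monomial of $F_T^P$ in a term order compatible with the last-letter order, in analogy with the identification of the leading term of $H_T^P$; the change is only in the signs coming from the different ordering of $c_T$ and $r_T$. An alternative abstract route uses the isomorphism $kS_n \cdot e_T \cong kS_n \cdot s_T$ coming from the nonzeroness of $e_T s_T$ in $kS_n$ (itself a coefficient-of-identity computation using $R(T) \cap C(T) = \{e\}$), combined with the previously established $H_T^P \neq 0$.

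Second, fix some $T_0 \in \ST(\lambda)$. Since $F_{T_0}^P \neq 0$, the $kS_n$-linear map $kS_n e_{T_0} \to S$ defined by $a e_{T_0} \mapsto a \cdot F_{T_0}^P$ is well-defined and has nonzero image, so by Schur's lemma (and irreducibility of $kS_n e_{T_0} \cong V_\lambda$) it is an isomorphism onto $kS_n \cdot F_{T_0}^P \cong V_\lambda$. Using $\sigma e_{T_0} \sigma^{-1} = e_{\sigma T_0}$ and $\sigma x_{T_0}^P = x_{\sigma T_0}^P$, one finds $\sigma \cdot F_{T_0}^P = F_{\sigma T_0}^P$, so every $F_T^P$ with $T \in \ST(\lambda)$ lies in $kS_n \cdot F_{T_0}^P$. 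A straightening argument—parallel to the one used for the $H_{T'}^P$—expresses each non-standard $F_{\sigma T_0}^P$ as a $k$-linear combination of the $F_{T'}^P$ with $T' \in \ST(\lambda)$, using the column antisymmetry built into $c_{T_0}$; together with the count $|\ST(\lambda)| = f^\lambda = \dim V_\lambda$, this makes $\{F_{T'}^P\}_{T' \in \ST(\lambda)}$ a $k$-basis of $kS_n \cdot F_{T_0}^P$. Hence $N^P = R \cdot (kS_n \cdot F_{T_0}^P) \cong V_\lambda \otimes_k R$ as an $RS_n$-module, matching the analogous description of $M^P$.

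The main obstacle is the straightening step, whose combinatorics must be reworked for $e_T$ in place of $s_T$: the signs from column permutations enter in a slightly different order, and care is needed to ensure that the reduction to standard tableaux closes up consistently. The nonvanishing $F_T^P \neq 0$ is similarly delicate but follows the same blueprint as for $H_T^P$. With these two ingredients in place, the isomorphism $N^P \cong M^P$ is immediate from their common abstract description.
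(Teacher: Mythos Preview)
Your approach is correct but takes a somewhat longer path than the paper's, and much of the work you flag as ``obstacles'' is already available to you.  The paper's argument also rests on the single fact that $kS_n\varepsilon_T \cong V_\lambda \cong kS_n\sigma_T$ as left $kS_n$-modules, but rather than re-establishing that $N^P$ realises $V_\lambda$ it simply transports the module isomorphism $\varphi: kS_n\varepsilon_{T_1} \to kS_n\sigma_{T_1}$ to the polynomial level: writing any element of $N^P$ as $(\sum_i c_i\,\pi_i\varepsilon_{T_1}).x_{T_1}^P$ (using that the $\pi_i\varepsilon_{T_1}$ are a basis of $kS_n\varepsilon_{T_1}$, cf.\ \cite[Lemma~5]{ariki1997}), one sends it to $(\sum_i c_i\,\varphi(\pi_i\varepsilon_{T_1})).x_{T_1}^P \in M^P$, and this is an isomorphism because $\varphi$ is.  Your route---showing $N^P\cong V_\lambda$ directly via Schur's lemma and then invoking the cited Ariki--Terasoma--Yamada theorem for $M^P$---yields the same conclusion and is perfectly sound; it just proves the two modules are abstractly isomorphic rather than writing down the map.

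One simplification: the nonvanishing $F_T^P\neq 0$ and the fact that $\{F_T^P : T\in\ST(\lambda)\}$ is linearly independent (hence a $k$-basis of the $f^\lambda$-dimensional module $kS_n\cdot F_{T_0}^P$) are immediate from Theorem~\ref{Thm:TerasomaYamada}, which you may cite.  So neither the leading-term computation nor the straightening step needs to be redone; once you have $\sigma\cdot F_{T_0}^P = F_{\sigma T_0}^P$ (which is correct) and linear independence, the identification $N^P = R\otimes_k (kS_n\cdot F_{T_0}^P)\cong R\otimes_k V_\lambda$ follows by a dimension count.
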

\begin{proof}
Recall that $kS_n\e_{T}$ and $kS_n\s_T$ are both isomorphic to the irreducible representation $V_\lambda$ \cite[Exercise 4.4]{fultonrep}. Therefore if we consider an ordering $\{T_1,\ldots,T_k\}$ of the standard tableaux of shape $\lambda$ according to the last letter ordering, and fix $\pi_i \in  S_n$ such that $\pi_i(T_1) = T_i$. It can be shown that $\pi_i\e_{T_i}$ and $\pi_i\s_{T_i}$ are a basis for $V_\lambda$ in $kS_n$ for this result see \cite[Lemma 5]{ariki1997}. Thus consider the isomorphism $\varphi:kS_n\e_T \to kS_n\s_T$. Then any element $f \in N^P= \langle F_T^P \st T \in \ST(\lambda)\rangle $ can be written as $(c_1\pi_1\e_{T_1} + \cdots + c_k\pi_k\e_{T_k}).x_T^P $. Consider this map between $N^P$ and $M^P$ 
\[ (c_1\pi_1\e_{T_1} + \cdots + c_k\pi_k\e_{T_k}).x_T^P \longmapsto (c_1\varphi(\pi_1\e_{T_1}) + \cdots + c_k\varphi(\pi_k\e_{T_k})).x_T^P  \]
The above map is an isomorphism since $\varphi$ is an isomorphism.
\end{proof}

\begin{defn}\label{def:freemods}
For a tableau $ T \in \ST(\lambda)$ let $M_T$ the $R$-submodule of $S/(R_+)$ generated by $\{H_T^P \st P \in \ST(\lambda)\}$ and $N_T$ the $R$-submodule generated by $\{F_T^P \,|\, P \in \ST(\lambda)\}$.
\end{defn}

\begin{remark}
The modules $M_S$ and $N_T$ are not irreducible representations of $S_n$ and are free $R$-modules.
\end{remark}

\begin{thm} \cite[Theorem 1]{HSPSN} \label{Thm:TerasomaYamada}The collection

\[\bigcup_{\lambda\vdash n } \{F^{S}_{T} \st T \in \ST(\lambda), S \in \ST(\lambda)\} \] 

form a $k$-basis for $S/(R_+)$.

\end{thm}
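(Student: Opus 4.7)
The plan is a two-step argument: a dimension count that reduces the theorem to linear independence, followed by a leading-term argument that establishes it.

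\textbf{Dimension count.} The total number of higher Specht polynomials across all $\lambda \vdash n$ is
$$\sum_{\lambda \vdash n} |\ST(\lambda)|^2 \;=\; \sum_{\lambda \vdash n} (f^\lambda)^2 \;=\; n!,$$
the last equality being the standard dimension formula for the regular representation of $S_n$ (equivalently, RSK). On the other hand, $\dim_k S/(R_+) = n!$ by Chevalley's theorem, since $S$ is a free $R$-module of rank $n!$. Matching cardinalities, it suffices to prove that the $F^S_T$ are linearly independent in $S/(R_+)$.

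\textbf{Representation-theoretic reduction.} Because $F^S_T = \e_T . x^{i(S)}_{w(T)}$ factors through the Young symmetrizer $\e_T$, which is (up to a scalar) an idempotent projecting onto a copy of $V_\lambda$, the polynomial $F^S_T$ lies in the $\lambda$-isotypical component $(S/(R_+))_\lambda \cong V_\lambda^{\oplus f^\lambda}$. By the theorem of \cite{ariki1997} cited above, for each fixed $S$ the span $N^S = \langle F^S_T : T \in \ST(\lambda)\rangle$ is an irreducible submodule isomorphic to $V_\lambda$. I am thus reduced to proving, for each fixed $\lambda$, that the $f^\lambda$ copies $\{N^S\}_{S \in \ST(\lambda)}$ sum directly, recovering the full $\lambda$-isotypical piece.

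\textbf{Leading monomials.} The heart of the argument is a leading-term calculation in the spirit of \cite{ariki1997}. Introduce a graded monomial order on $S$ (for instance a lexicographic refinement of total degree tied to the reading-word structure on tableaux) and prove that the leading monomial of $F^S_T$ is, up to a nonzero scalar, the monomial $x^{i(S)}_{w(T)}$ one started from before applying $\e_T$. Granted this, the map $(T,S) \mapsto x^{i(S)}_{w(T)}$ is injective: the multiset $\hat i(S)$ determines the shape $\lambda$ and then $S$ via Definition \ref{def:index}, and the indexing of the variables by $w(T)$ recovers $T$. Hence the leading monomials are pairwise distinct, which gives linear independence of the $F^S_T$ in $S$ by a standard triangularity argument. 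Independence persists in $S/(R_+)$ once one checks that the multi-degrees of the $F^S_T$ recover the Hilbert series $\prod_{i=1}^n [i]_q$ of the coinvariant algebra, so no nontrivial linear combination can land in the graded ideal $(R_+)$.

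\textbf{Main obstacle.} The substantive difficulty is the leading-monomial claim itself. Expanding $\e_T . x^{i(S)}_{w(T)} = \tfrac{f^\lambda}{n!}\, c_T r_T . x^{i(S)}_{w(T)}$, one must verify that the original monomial appears with nonzero coefficient and strictly dominates every other monomial produced by the symmetrizer. This rests on the structural fact that, when the entries of $i(S)$ are transported to the cells of $T$ along the word $w(T)$, they are weakly increasing along rows of $T$ and strictly increasing along columns of $T$; consequently $r_T$ stabilizes the monomial and each nontrivial column permutation appearing in $c_T$ yields a strictly smaller monomial in the chosen order. Verifying this dominance carefully — essentially a Garnir-style computation adapted to the index statistic of Definition \ref{def:index} — is where the real technical work lies.
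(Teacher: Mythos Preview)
The paper does not supply its own proof of this theorem; it is quoted verbatim from \cite{HSPSN}. The argument in \cite{HSPSN} and \cite{ariki1997} is \emph{not} a leading-monomial argument: it proceeds via the bilinear form $\langle f,g\rangle=\frac{1}{z}\sum_{\pi\in S_n}\sgn(\pi)\pi(fg)$ (the same form this paper recalls before Lemma~\ref{Lem:Bilinear}). One shows that $\langle F_T^{S_1},F_{T'}^{S_2'}\rangle$ vanishes when $S_1<S_2$ in a suitable order and equals a nonzero constant when $S_1=S_2$; this triangular pairing against a dual family forces linear independence in $S/(R_+)$, and the dimension count you give in your first step finishes the proof. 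Your strategy (distinct leading monomials) is a genuinely different route, closer in spirit to Garnir/straightening arguments.

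Your dimension count and isotypical reduction are fine, but the core of your step~3 has a concrete gap. The assertion that ``$r_T$ stabilizes the monomial $x_{w(T)}^{i(S)}$'' is false. Take $\lambda=(2,1)$ and $T=S$ the standard tableau with $1,3$ in the first row and $2$ below; then $i(S)$ has first row $(0,1)$, the monomial is $x_2x_3$, and the row transposition $(1\ 3)\in R(T)$ sends it to $x_1x_2$. The index tableau $i(S)$ is only \emph{weakly} increasing along rows, not constant, so $r_T$ genuinely produces new monomials. What you need is the finer statement that, in your chosen order, the original monomial strictly dominates every other monomial appearing in $c_T r_T\cdot x_{w(T)}^{i(S)}$ and survives with nonzero coefficient; this requires tracking cancellations across the double sum $c_Tr_T$, and your sketch does not do this. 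A second loose end is the descent from independence in $S$ to independence in $S/(R_+)$: matching the Hilbert series, as you suggest, does work, but it is an additional combinatorial identity (essentially that $\sum_{S\in\ST(\lambda)}q^{|i(S)|}$ recovers the fake-degree polynomial of $V_\lambda$) which you have asserted rather than verified.
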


\begin{example}
Let $T$ be the following standard tableau:

\[T=\ytableausetup{mathmode, boxsize=2em}
\begin{ytableau}
1 \\
2 \\
\none[\vdots]
\\
\scriptstyle n-1 \\
n
\end{ytableau}\] 
Note that $T$ gives rise to the determinantal representation $V_{\det}$ of $S_n$.
The Young Symmetrizer $\varepsilon_T$ is given by:
\[\varepsilon_T=\frac{1}{n!}c_Tr_T=\frac{1}{n!}\left(\sum_{\pi \in C(T)} \sgn(\pi)\pi\right)id=\frac{1}{n!}\sum_{\pi \in S_n} \sgn(\pi)\pi. \]

The index $i(T)=(n-1,n-2,\dots,1,0)$ and so $F_T^T= \varepsilon_T(x_1^0x_2^1\cdots x_{n}^{n-1})$. The higher Specht polynomial $F_T^T$ is the polynomial $\frac{1}{n!}z$. Moreover, in this case we also have 

\[\sigma_T = \frac{1}{n!}r_Tc_T =\frac{1}{n!}id\left(\sum_{\pi \in C(T)} \sgn(\pi)\pi\right)=\frac{1}{n!}\varepsilon_T.\]
 and so $H^T_T=F^T_T= \frac{1}{n!}z$
\end{example}

\begin{lem}\label{Zeroing}
If $T_1<T_2$ then $\e_{T_1}\e_{T_2}=\sigma_{T_2}\sigma_{T_1}= 0$.
\end{lem}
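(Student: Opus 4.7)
The plan is to extract from the assumption $T_1 < T_2$ a single transposition $\tau \in S_n$ that lies simultaneously in $R(T_1)$ and $C(T_2)$, and then use a standard sign-absorbing manipulation to kill the cross terms $r_{T_1}c_{T_2}$ and $c_{T_2}r_{T_1}$ hidden inside $\e_{T_1}\e_{T_2}$ and $\sigma_{T_2}\sigma_{T_1}$. The combinatorial input I would establish first is the following \emph{sign lemma} tailored to the last letter ordering: if $T_1,T_2 \in \ST(\lambda)$ with $T_1 < T_2$, then there exist distinct $a,b \in \{1,\dots,n\}$ sitting in the same row of $T_1$ and in the same column of $T_2$. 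To prove it, let $k$ be the largest entry whose cell differs between $T_1$ and $T_2$; the definition of the last letter ordering places $k$ strictly higher in $T_2$ than in $T_1$, and since all entries $> k$ occupy identical cells, the cell of $T_1$ at $k$'s position in $T_2$ houses some $a < k$. From this seed, a Hall-type/pigeonhole argument matching rows of $T_1$ against columns of $T_2$ (the classical sign lemma of Specht-module theory; see e.g.\ Fulton's \emph{Young Tableaux}, \S7.2) produces the required pair. The contrapositive reads: if no such pair existed, one could find $\pi \in R(T_1)$ and $\rho \in C(T_2)$ with $\rho T_2 = \pi T_1$, which for standard tableaux of the same shape would force equal row-sets and hence $T_1 = T_2$, contradicting $T_1 < T_2$.

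Given such $(a,b)$, set $\tau = (a\,b)$, so that $\tau \in R(T_1)\cap C(T_2)$ and $\sgn(\tau)=-1$. Right multiplication by $\tau$ permutes $R(T_1)$, giving $r_{T_1}\tau = r_{T_1}$, while the corresponding manipulation on the signed sum $c_{T_2}$ gives $\tau c_{T_2} = c_{T_2}\tau = -c_{T_2}$. Combining these,
\[
r_{T_1}\,c_{T_2} \;=\; (r_{T_1}\tau)\,c_{T_2} \;=\; r_{T_1}(\tau c_{T_2}) \;=\; -\,r_{T_1}\,c_{T_2},
\]
so $r_{T_1}c_{T_2}=0$, because $\mathrm{char}(k) \nmid n!$ forces $\mathrm{char}(k)\neq 2$. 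The mirror manipulation $c_{T_2}\,r_{T_1} = (c_{T_2}\tau)\,r_{T_1} = -c_{T_2}\,r_{T_1}$ yields $c_{T_2}r_{T_1}=0$ in $kS_n$ by the same token.

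To finish, observe that $\e_{T_1}\e_{T_2}$ is a nonzero scalar multiple of $c_{T_1}\bigl(r_{T_1}c_{T_2}\bigr)r_{T_2}$, and $\sigma_{T_2}\sigma_{T_1}$ is a scalar multiple of $r_{T_2}\bigl(c_{T_2}r_{T_1}\bigr)c_{T_1}$; both vanish by the previous step. The whole argument is thus reduced to the combinatorial input, which is the main obstacle: one must verify that the last letter ordering is calibrated so as to supply the pair $(a,b)$ in the correct orientation (same row in $T_1$, same column in $T_2$) rather than the reverse, since the absorbing trick requires precisely this side of the sign lemma. Once that orientation is secured, the rest of the proof is routine symmetric-group bookkeeping.
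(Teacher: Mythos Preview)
Your approach is the standard one and coincides with what the paper's cited references (Lemma~4 of \cite{ariki1997} and Proposition~1 of \cite{orth}) actually carry out; the paper itself offers no argument beyond pointing to those sources, so on the level of strategy there is nothing to contrast.

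There is, however, a real gap in your contrapositive sketch of the sign lemma. From $\rho T_2 = \pi T_1$ with $\pi\in R(T_1)$ and $\rho\in C(T_2)$ you may only conclude that $T_1$ is \emph{row}-equivalent to a \emph{column}-permutation of $T_2$; this does not force $T_1 = T_2$. For a concrete witness with $\lambda=(3,2)$, take $T_1$ with rows $\{1,3,5\},\{2,4\}$ and $T_2$ with rows $\{1,2,3\},\{4,5\}$: every column of $T_2$ meets each row of $T_1$ at most once, so no transposition lies in $R(T_1)\cap C(T_2)$, yet $T_1\ne T_2$; in fact $T_1>T_2$. What the basic combinatorial lemma actually delivers in the contrapositive is only $T_1\ge T_2$ in the last letter order, and reaching even that conclusion requires an honest induction on $n$ (compare the positions of the largest entry, strip it off, repeat) rather than the one-line row-set remark you give. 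Once you replace ``hence $T_1=T_2$'' with ``hence $T_1\ge T_2$'' and supply that inductive step, your argument goes through; the sign-absorbing manipulation in the second half is correct as written.
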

\begin{proof}
The proof for $\varepsilon_{T_1}\varepsilon_{T_2}=0$ is widely known, see \cite[Lemma 4]{ariki1997} or \cite[Proposition 1]{orth}. The equality $\sigma_{T_2}\sigma_{T_1}=0$ can be seen by using a similar proof.
\end{proof}

\begin{lem}
Let $T$ be a Young tableau of shape $\lambda$ and $T'$ its conjugate, then we have
\[\varepsilon_{T}(zf)=z\sigma_{T'}(f)\]
for any polynomial $f \in S$.
\end{lem}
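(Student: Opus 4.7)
The key fact driving the proof is that $z=\prod_{1\le i<j\le n}(x_i-x_j)$ is the Vandermonde polynomial, and as already recorded in the paper it is the relative invariant for the sign representation of $S_n$: for every $\pi\in S_n$, $\pi\cdot z=\sgn(\pi)\,z$. This single identity will let me pull $z$ out of both $r_T$ and $c_T$, while the resulting sign twists convert the row symmetrizer and column antisymmetrizer attached to $T$ into the column symmetrizer and row antisymmetrizer attached to the conjugate tableau $T'$.

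Concretely, I would unwind $\varepsilon_T(zf)=\tfrac{f^\lambda}{n!}\,c_T\,r_T(zf)$ and apply the two symmetrizers in succession. For $\pi\in R(T)$, $\pi(zf)=\sgn(\pi)\,z\,\pi(f)$, so
$$r_T(zf)=z\cdot\sum_{\pi\in R(T)}\sgn(\pi)\,\pi(f)\ .$$
Next, for any polynomial $g$ and any $\rho\in C(T)$, the sign $\sgn(\rho)$ already sitting inside $c_T$ is multiplied by a second $\sgn(\rho)$ produced by $\rho(z)=\sgn(\rho)\,z$, and the two signs cancel, giving
$$c_T(z\,g)=z\cdot\sum_{\rho\in C(T)}\rho(g)\ .$$
Combining the two steps yields
$$\varepsilon_T(zf)=z\cdot\frac{f^\lambda}{n!}\Bigl(\sum_{\rho\in C(T)}\rho\Bigr)\Bigl(\sum_{\pi\in R(T)}\sgn(\pi)\,\pi\Bigr)(f)\ .$$

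To finish, I would invoke the basic combinatorial fact that transposing interchanges rows and columns: $R(T')=C(T)$ and $C(T')=R(T)$. Hence $\sum_{\rho\in C(T)}\rho=r_{T'}$ and $\sum_{\pi\in R(T)}\sgn(\pi)\,\pi=c_{T'}$, while $f^{\lambda'}=f^\lambda$ since conjugation is an involution on $\ST(\lambda)$. Substituting these identifications into the displayed expression gives $\varepsilon_T(zf)=z\cdot\tfrac{f^{\lambda'}}{n!}\,r_{T'}\,c_{T'}(f)=z\,\sigma_{T'}(f)$, as required.

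The argument is essentially formal once the Vandermonde property is noted, so I do not expect a real obstacle. The only point that deserves care is the sign cancellation in the $c_T$ step: this is precisely what flips the antisymmetrizer into a plain symmetrizer (and vice versa for the row factor), which is exactly the effect of passing from $T$ to its conjugate $T'$.
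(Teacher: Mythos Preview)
Your proof is correct and follows essentially the same route as the paper: both arguments pull $z$ through the symmetrizers using $\pi\cdot z=\sgn(\pi)\,z$, then identify $R(T)=C(T')$ and $C(T)=R(T')$ to turn $c_Tr_T$ (with the sign moved to the row factor) into $r_{T'}c_{T'}$. You are in fact slightly more careful than the paper's version, since you track the constant $f^\lambda/n!$ and note $f^{\lambda}=f^{\lambda'}$ explicitly.
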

\begin{proof}
	\sloppy{We first observe that for a Young tableau $T$ of shape $\lambda$,  $R(T)=C(T')$, $C(T)=R(T')$ and so 
	\[ \varepsilon_{T}=\sum_{r \in R(T), c \in C(T)} \sgn(c)rc= \sum_{c \in C(T'), r \in R(T')} \sgn(r)cr.\]
	We also have that for any $\pi \in S_n$, $\pi(z)=\sgn(\pi) z$, and so for any polynomial $f$;}
	\begin{equation*}
		\begin{split}
		\varepsilon_{T}(zf)&=\sum_{r \in R(T), c \in C(T)} \sgn(c)rc(zf)= \sum_{c \in C(T'), r \in R(T')} \sgn(r)cr(zf) \\ &= z\left(\sum_{c \in C(T'), r \in R(T')} \sgn(c)cr(f))= z(\sigma_{T'}(f))\right)
		\end{split}
	\end{equation*}
\end{proof}

\begin{thm}\label{Big}
For the discriminant $\Delta$ of $S_n$, the matrix factorization defined by the reduced hyperplane arrangement, $(z,z)$, can be decomposed in the following way:

\[(z,z) = \bigoplus_{\lambda\vdash n} \bigoplus_{T\in \text{ST}(\lambda)} (z|_{M_{T}},z|_{N_{T'}}).\]

$(z|_{M_{T}},z|_{N_{T'}})$ are the matrix factorizations:

  \begin{center}
    \begin{tikzcd}
    M_{T} \arrow[r, "z|_{M_{T}}"] & N_{T'} \arrow[r, "z|_{N_{T'}}"] & M_{T}
    \end{tikzcd}
    \end{center}
    $M_T$ the $R$-submodule of $S/(R_+)$ generated by $\{H_T^P \st P \in \ST(\lambda)\}$ and $N_{T'}$ the $R$-submodule generated by $\{F_{T'}^{P'} \,|\, {P'} \in \ST(\lambda')\}$.
\end{thm}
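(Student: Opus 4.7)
The plan is to prove the theorem in three steps: establish the direct-sum decomposition $S = \bigoplus M_T = \bigoplus N_T$ as free $R$-modules, identify each summand as the image of a Young symmetrizer, and then derive a companion to the preceding lemma in order to check that multiplication by $z$ shuffles these summands compatibly.

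For the decomposition, Theorem \ref{Thm:HBasis} provides an $S_n$-equivariant isomorphism $M^P \cong N^P$ for each $P \in \ST(\lambda)$, so combining with Theorem \ref{Thm:TerasomaYamada} the modified higher Specht polynomials $\{H^P_T\}$ also form a $k$-basis of the coinvariant algebra $S/(R_+)$. An $S_n$-equivariant splitting of the Chevalley isomorphism $S \cong S/(R_+) \otimes_k R$ lifts both families to $R$-bases of $S$, exhibiting $M_T$ and $N_T$ as free $R$-modules of rank $f^\lambda$ and giving both direct-sum decompositions of $S$.

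Next I would identify $N_T = \varepsilon_T(S)$ and $M_T = \sigma_T(S)$ as $R$-submodules of $S$. Since $\varepsilon_T$ is a primitive idempotent of $kS_n$ acting $R$-linearly on $S$, its image $\varepsilon_T(S)$ is a graded free $R$-direct summand, of rank equal to the multiplicity of $\varepsilon_T$ on the regular representation $S/(R_+)$, namely $f^\lambda$. The containment $N_T \subseteq \varepsilon_T(S)$ is immediate from $F^P_T = \varepsilon_T(x^P_T)$, and equality follows by a graded Nakayama argument: both $N_T/(R_+ N_T)$ and $\varepsilon_T(S)/(R_+\varepsilon_T(S))$ are $k$-subspaces of $S/(R_+)$ of dimension $f^\lambda$, with the first contained in the second, so they agree. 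The argument for $M_T$ is identical with $\sigma_T$ in place of $\varepsilon_T$.

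For the compatibility step, the same calculation as in the preceding lemma---now using $\sigma_T = \tfrac{f^\lambda}{n!}r_Tc_T$ and exchanging the roles of $R(T)$ and $C(T)$ via $R(T')=C(T)$, $C(T')=R(T)$, together with $\pi(z)=\sgn(\pi)z$---yields the companion identity $\sigma_T(zf) = z\,\varepsilon_{T'}(f)$ for every $f \in S$. Combined with the identifications above:
\[
z M_T = z\,\sigma_T(S) = \varepsilon_{T'}(zS) \subseteq \varepsilon_{T'}(S) = N_{T'}, \qquad z N_{T'} = z\,\varepsilon_{T'}(S) = \sigma_T(zS) \subseteq \sigma_T(S) = M_T.
\]
Thus multiplication by $z$ restricts to maps $M_T \to N_{T'} \to M_T$ whose composition is $z^2 = \Delta$, producing the matrix factorization $(z|_{M_T},z|_{N_{T'}})$ of $\Delta$. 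Summing over $\lambda \vdash n$ and $T \in \ST(\lambda)$ (and noting that $T\mapsto T'$ is a bijection $\ST(\lambda)\to \ST(\lambda')$, so the second index set is merely reparametrized) yields the claimed direct-sum decomposition of $(z,z)$. The main technical obstacle is the identification $N_T = \varepsilon_T(S)$ in step two; once this is in hand, everything else is a formal consequence of the Young-symmetrizer identities.
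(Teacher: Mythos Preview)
Your argument is correct and takes a genuinely different route from the paper's.

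The paper proves Theorem~\ref{Big} by expanding $zH_T^P$ in the $F$-basis of the conjugate isotypical component and then eliminating the unwanted coefficients one by one: it orders $\ST(\lambda')$ by the last-letter order, applies $\varepsilon_{T_1'}$ to both sides, and invokes Lemma~\ref{Zeroing} (that $\varepsilon_{T_1}\varepsilon_{T_2}=\sigma_{T_2}\sigma_{T_1}=0$ for $T_1<T_2$) together with the commutation lemma $\varepsilon_T(zf)=z\sigma_{T'}(f)$ to see that only the $F_{T_1'}^W$-terms survive; a permutation argument then transports this to every $T_i$. Your approach bypasses the ordering and Lemma~\ref{Zeroing} entirely: once you identify $M_T=\sigma_T(S)$ and $N_T=\varepsilon_T(S)$ as images of the primitive idempotents, the containments $zM_T\subseteq N_{T'}$ and $zN_{T'}\subseteq M_T$ fall out in one line from the commutation identity and its companion $\sigma_T(zf)=z\varepsilon_{T'}(f)$. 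This is conceptually cleaner and explains structurally \emph{why} the decomposition respects the $T$-indexing, whereas the paper's proof verifies it by a basis computation. The paper's method, on the other hand, stays closer to the explicit polynomials and feeds directly into the recursive formula of Theorem~\ref{Bilinear}.

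One small caveat: you say the verification of $M_T=\sigma_T(S)$ is ``identical'' to that of $N_T=\varepsilon_T(S)$, but it is not quite symmetric. For $N_T$ the Nakayama step uses that the $F_T^P$ (fixed $T$, varying $P$) are linearly independent in $S/(R_+)$, which is immediate from Theorem~\ref{Thm:TerasomaYamada}. For $M_T$ you need the analogous independence of the $H_T^P$, which is exactly the $H$-basis claim from your step one; so step two for $M_T$ genuinely depends on step one, and the deduction of the $H$-basis from Theorems~\ref{Thm:HBasis} and~\ref{Thm:TerasomaYamada} deserves a sentence more than ``combining'' --- one needs that the $M^P$ for $P\in\ST(\lambda)$ are independent inside the $\lambda$-isotypical component, not merely each isomorphic to $V_\lambda$. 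The paper also takes this for granted, so you are on equal footing, but it is worth flagging.
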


\begin{proof}
For an irreducible representation $\lambda$ of $G$, recall that the map $z$ takes elements from the isotypical component $S_{\lambda}$ of $S$ to the isotypical component $S_{\lambda'}$ of $S$ for $\lambda'=\lambda\otimes \det$. Thus the matrix factorization decomposes immediately as:
\[(z,z) = \bigoplus_{\lambda\vdash n} (z|_{S_{\lambda}},z|_{S_{\lambda'}}).\]
Let $T,P \in \ST(\lm)$, and consider $H_T^P \in S_{\lm}$ from above. Hence $zH_T^P \in S_{\lm'}$. We can write $zH_T^P$ as the following; 
    \begin{align} \label{equation1}
        zH_T^P = \sum\limits_{U,W \in \ST(\lm')}g_{U,T}^{P,W}F_U^W \ ,
    \end{align} 
    
where $g_{U,T}^{P,W}$ are in $R$, since the $F_U^W$ form an $R$-basis of $S_{\lambda'}$ by Theorem \ref{Thm:TerasomaYamada}. Recall that $F_T^P = \varepsilon_{T}.x_T^P$, and $H_T^P = \sigma_{T}.x_T^P$.  If $T_1 <T_2$ then by Lemma \ref{Zeroing} we have that $\varepsilon_{T_1}\varepsilon_{T_2} = \sigma_{T_2}\sigma_{T_1} = 0 $ and hence $\varepsilon_{T_1}F_{T_2}^P= \varepsilon_{T_1}\varepsilon_{T_2}.x_{T_2}^P = 0$ and $  \sigma_{T_2}H^P_{T_1} =\sigma_{T_2}\sigma_{T_1}.x_{T_1}^P = 0$.
    Order $\ST(\lm') = (T_1',\hdots,T_k')$, such a way that if $i<j$ then $T_i' < T_j'$. We want to calculate $zH_{T_1}^P$. Applying $\varepsilon_{T'_1}$ to both sides of equation \eqref{equation1} yields

    \begin{align*}
        \varepsilon_{T'_1}\sum\limits_{U,W \in \ST(\lm')}g_{U,T_1}^{P,W}F_U^W &=  \sum\limits_{U,W \in \ST(\lm')}g_{U,T_1}^{P,W}(\varepsilon_{T'_1}F_U^W)\\
                                  &=\sum\limits_{W \in \ST(\lm')}g_{T'_1,T_1}^{P,W}F_{T'_1}^W \ ,
    \end{align*}
    since $T_1'$ is the least element in $\ST(\lambda')$. Further compute
    \begin{align*}
        \varepsilon_{T'_1}(zH_{T_1}^P) &= z(\sigma_{T_1} H_{T_1}^P)\\
                                  &= zH_{T_1}^P \ .\\
    \end{align*}

    Thus we have that 
    
    \[zH_{T_1}^P = \sum\limits_{W \in \ST(\lm')}g_{T'_1,T_1}^{P,W}F_{T'_1}^W \ . \]
    With the argument above and the fact that for any $T_i \in \ST(\lm)$ there exists a permutation $\pi$ that permutes $T_i$ and $T_1$ we have the following equation for any $T_i \in \ST(\lm)$.

    \[zH_{T_i}^{P} =  \sum\limits_{W \in \ST(\lm)}(\sgn(\pi) g_{T_i',T_1}^{P,W})F_{T_i'}^W  \]
    
    This shows that for any $H_T^P \in S_{\lm}$ we have that $zH_T^P \in \gen{F_{T'}^P \st P \in \ST(\lm')}$. Following a similar argument above we have that when we restrict $z$ to $\gen{F_{T'}^P \st P \in \ST(\lm')}$ we have an element in $\gen{H_{T}^P \st P \in \ST(\lm)}$. 
    \[ zF_{T_i'}^{P} =  \sum\limits_{W \in \ST(\lm)}(\sgn(\pi) h_{T_i,T'_k}^{P,W})H_{T_i}^W \]
    We can see that for a $T \in \ST(\lm)$ and using the notation above we can write the matrices of the maps as 
        \[ \left[g_{T,T'}^{P,W}\right]\left[h_{T',T}^{P,W}\right]_{P,W'\in \ST(\lm)} = \Delta \text{Id}_{|\ST(\lambda)|\times |\ST(\lambda)|}  \]

\end{proof}
\bigskip

\begin{thm} \label{Big-equiv}
If $T_1,T_2 \in \ST(\lambda)$ then there is a matrix factorization equivalence between $(z|_{N_{T_1}},z|_{M_{T_1'}})$ and $(z|_{N_{T_2}},z|_{M_{T_2'}})$.
\end{thm}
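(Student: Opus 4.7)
The plan is to produce the equivalence explicitly via a permutation of coordinates. Given $T_1,T_2\in\ST(\lambda)$, let $\pi\in S_n$ be the unique permutation determined by the condition that the entry of $T_2$ in each cell equals $\pi$ applied to the entry of $T_1$ in that cell; then $\pi\cdot T_1=T_2$. Since transposing a tableau commutes with relabelling its entries, the same $\pi$ also satisfies $\pi\cdot T_1'=T_2'$. I will show that left multiplication by $\pi$, together with a sign correction, yields the required equivalence.

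First I would translate this to the Specht generators. Since $R(\pi T)=\pi R(T)\pi^{-1}$ and $C(\pi T)=\pi C(T)\pi^{-1}$, we have the conjugation identities $\varepsilon_{T_2}=\pi\varepsilon_{T_1}\pi^{-1}$ and $\sigma_{T_2'}=\pi\sigma_{T_1'}\pi^{-1}$. Combined with the equality $\pi\cdot x_T^P=x_{\pi T}^P$ (which follows from $w(\pi T)=\pi\cdot w(T)$), these give
\[
\pi\cdot F_{T_1}^P=F_{T_2}^P\quad\text{for all }P\in\ST(\lambda),\qquad \pi\cdot H_{T_1'}^{P'}=H_{T_2'}^{P'}\quad\text{for all }P'\in\ST(\lambda').
\]
Hence left multiplication by $\pi$ on $S$ restricts to bijections $\alpha\colon N_{T_1}\to N_{T_2}$ and $\beta\colon M_{T_1'}\to M_{T_2'}$ sending basis to basis; they are $R$-linear because $R=S^{S_n}$ is fixed pointwise by $\pi$, and invertible because $\pi^{-1}$ provides a two-sided inverse.

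Finally I would check compatibility with the two multiplication-by-$z$ arrows. Because $z$ is the relative invariant for the sign character, $\pi\cdot(zf)=\sgn(\pi)\,z\cdot(\pi\cdot f)$ for every $f\in S$, so $\alpha$ and $\beta$ intertwine $z$ only up to the common sign $\sgn(\pi)$. Absorbing this sign by replacing $\beta$ with $\sgn(\pi)\beta$, the pair $(\alpha,\sgn(\pi)\beta)$ becomes a morphism of matrix factorizations fitting into the commutative diagram
\[
\begin{tikzcd}
N_{T_1}\arrow[r,"z"]\arrow[d,"\alpha"] & M_{T_1'}\arrow[r,"z"]\arrow[d,"\sgn(\pi)\beta"] & N_{T_1}\arrow[d,"\alpha"]\\
N_{T_2}\arrow[r,"z"] & M_{T_2'}\arrow[r,"z"] & N_{T_2},
\end{tikzcd}
\]
with both vertical arrows $R$-module isomorphisms. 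This is the desired equivalence in $\MF_R(\Delta)$. The only real delicacy is keeping track of the sign $\sgn(\pi)$ and putting it in the correct slot; everything else reduces to the classical conjugation behaviour of the Young (anti-)symmetrizers that is already exploited in Theorem \ref{Big}.
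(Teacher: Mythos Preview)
Your proof is correct and follows essentially the same approach as the paper's: both pick the permutation $\pi$ with $\pi\cdot T_1=T_2$, use the conjugation identities $\varepsilon_{\pi T}=\pi\varepsilon_T\pi^{-1}$, $\sigma_{\pi T}=\pi\sigma_T\pi^{-1}$ together with $\pi\cdot x_T^P=x_{\pi T}^P$ to see that $\pi$ carries the chosen bases of $N_{T_1}$, $M_{T_1'}$ to those of $N_{T_2}$, $M_{T_2'}$, and then use $\pi\cdot z=\sgn(\pi)z$ to conclude the two matrices differ only by the scalar $\sgn(\pi)$. Your write-up is more explicit about packaging this as an isomorphism $(\alpha,\sgn(\pi)\beta)$ in $\MF_R(\Delta)$, whereas the paper simply notes that the matrices agree up to a scalar and hence are equivalent.
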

\begin{proof}
Consider $\pi \in S_n$ such that $\pi(T_1) = T_2$, thus for any $P \in \ST(\lambda)$ we have that 
\[zH_{T_1}^P = \sgn{(\pi)}zH_{T_2}^P \:\text{ and }\: zF_{T_1'}^{P'} = \sgn{(\pi)}zF_{T_2'}^{P'}\] 
This means that $z|_{N_{T_1}} = \sgn(\pi)z|_{N_{T_2}}$ and $z|_{M_{T_1'}} = \sgn(\pi)z|_{M_{T_2'}}$. Therefore the matrices are the same up to multiplication by a scalar matrix there is a matrix factorization equivalence between them. 
\end{proof}

\begin{remark}
Let $\lambda$ be a Young diagram of size $n$, then $|\ST(\lambda)|= \dim S_\lambda$, so we get $\dim S_\lambda$ copies of the maximal Cohen--Macaulay-module $\cok(z|_{N_{T_1}},z|_{M_{T_1'}})$ in the decomposition.
\end{remark}

\begin{defn}
Define a $R$-bilinear form $\gen{-,-}: S\times S \to R$ where for any $f,g \in S$ we have that $ \gen{f,g} = \frac{1}{z} \sum_{\pi \in S_n} \sgn(\pi)\pi(fg)$.
\end{defn}

Note that this bilinear form is the as the one used in \cite{ariki1997}, except we do not set the variables to $0$. Recall from Definition \ref{def:index} that for a Tableau $T$, $\hat{i}(T)$ is $i(T)$ written in non decreasing order and $|i(T)|$ is the sum of the indexes. Consider the ordering on $\ST(\lambda)$, where $S_1<S_2$ if and only if $|\hat{i}(S_1)|<|\hat{i}(S_2)|$, if $|\hat{i}(S_1)|=|\hat{i}(S_2)|$ then $\hat{i}(S_1)<\hat{i}(S_2)$ with respect to the reverse lexicographical ordering, if $\hat{i}(S_1)=\hat{i}(S_2)$ then $S_1<S_2$ with respect to the last letter order. 

\begin{lem}\label{Lem:Bilinear}
Let $S_1<S_2$ with respect to the ordering above, then $\gen{F_T^{S_1},F_{T'}^{S_2'}}=0$ 
\end{lem}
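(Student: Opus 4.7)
The plan is to compute the degree of $\langle F_T^{S_1}, F_{T'}^{S_2'}\rangle$ as a homogeneous element of $R$ and then split into two regimes based on the sign of that degree. Since $F_T^{S_1} = \varepsilon_T \cdot x_T^{S_1}$ arises by applying a $k$-linear combination of permutations to a monomial of total degree $|i(S_1)|$, it is homogeneous of that same degree; similarly $\deg F_{T'}^{S_2'} = |i(S_2')|$. The antisymmetrizer $\sum_\pi \sgn(\pi)\pi(\cdot)$ produces an alternating polynomial, hence one divisible by $z$, so the pairing $\langle -,- \rangle$ lowers degree by $\deg z = \binom{n}{2}$. Consequently $\langle F_T^{S_1}, F_{T'}^{S_2'}\rangle$ is homogeneous in $R$ of degree $|i(S_1)| + |i(S_2')| - \binom{n}{2}$.

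The key combinatorial step is to prove the complementary identity $|i(S)| + |i(S')| = \binom{n}{2}$ for any standard tableau $S$ of size $n$. I would argue cell-by-cell: from the recursion defining $i$, the increment $i(j+1) - i(j) \in \{0,1\}$ equals $1$ precisely when the column of $j+1$ in $S$ does not exceed the column of $j$, while the analogous statement for $S'$ swaps rows and columns. Using left-justification of the Young diagram together with strict monotonicity along rows and columns, one checks that neither ``columns and rows both strictly increase'' nor ``columns and rows both weakly decrease'' is compatible with a standard tableau at a consecutive pair $(j, j+1)$, so exactly one of the two contributions fires at each step. Weighting $i(j+1) - i(j)$ by the number $(n-j)$ of later entries it affects and summing gives $\binom{n}{2}$. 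This short case analysis is the main technical hurdle.

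Substituting the identity, $\deg \langle F_T^{S_1}, F_{T'}^{S_2'}\rangle = |\hat{i}(S_1)| - |\hat{i}(S_2)|$. If $|\hat{i}(S_1)| < |\hat{i}(S_2)|$, the degree is negative and the pairing vanishes for free. Otherwise $|\hat{i}(S_1)| = |\hat{i}(S_2)|$, the pairing is a constant in $k$, and it coincides with the $k$-valued form on the coinvariant algebra $S/(R_+)$ employed by Ariki--Terasoma--Yamada in \cite{ariki1997}. The remaining criteria of our ordering --- reverse-lex on $\hat{i}$ followed by the last-letter order --- match those governing their orthogonality statement, which then supplies the vanishing. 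The final bookkeeping task is to verify that our ordering conventions align with theirs on these secondary criteria.
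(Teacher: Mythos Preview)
Your proposal is correct and follows essentially the same route as the paper: reduce to a degree computation, use the complementary identity $|i(S)|+|i(S')|=\binom{n}{2}$, and in the degree-zero case defer to the orthogonality results of \cite{ariki1997}. The only difference is that you supply a direct cell-by-cell proof of the complementary identity (which is valid), whereas the paper simply cites it as \cite[Lemma~1]{ariki1997}.
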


\begin{proof}
The main idea here is that if $\deg(fg)<\deg(z)=\frac{n(n-1)}{2}$ then $\gen{f,g}$ is either 0 and if $\deg(fg)=\frac{n(n-1)}{2}$ then it is a constant. In these cases the result \cite[Proposition 1]{ariki1997} for $\gen{-,-}$ hold, thus it is sufficient to show that if $S_1<S_2$ then $\deg(F_T^{S_1}F_T'^{S_2'})< \frac{n(n-1)}{2}$. The Lemma then follows from the case distinction:
\begin{enumerate}
    \item If $|\hat{i}(S_1)|<|\hat{i}(S_2)|$, then $|\hat{i}(S_1)|+|\hat{j}(S_2)|<\frac{n(n-1)}{2}$ by \cite[Lemma 1]{ariki1997}.
    \item  If $|\hat{i}(S_1)|=|\hat{i}(S_2)|$ and $\hat{i}(S_1)<\hat{i}(S_2)$ in the reverse lexicographical ordering, then $|\hat{i}(S_1)|=\frac{n(n-1)}{2}-|\hat{j}(S_2)|$ and $|\hat{i}(S_1)|+|\hat{j}(S_2)|=\frac{n(n-1)}{2}$ and thus from the proof of \cite[Theorem 1]{ariki1997} the results hold. 
    \item If $|\hat{i}(S_1)|=|\hat{i}(S_2)|$ and $\hat{i}(S_1)=\hat{i}(S_2)$, then if $S_1<S_2$ with respect to the last letter order the results hold using the same argument as in $(2)$.
\end{enumerate}
\end{proof}

\begin{thm}\label{Bilinear}
 Let $\lambda$ be a Young diagram, where $m$ is the dimension of the corresponding irreducible representation and $T \in \ST(\lambda)$. The matrix factorization $(z|_{N_{T}},z|_{M_{T'}})$ can be written explicitly as $(A,B)$, where
    \[ A=  \begin{bmatrix}
      g_1^1& \cdots & g_m^1 \\
      \vdots & \ddots & \vdots \\
     g_1^m & \cdots & g_m^m
      \end{bmatrix} \]

$B$ is a $m\times m$ matrix obtained by taking the first reduced syzygy of $A$, $\mathrm{syz}^1_RA$ and $g_i^j$ are defined iteratively as:

\begin{center}
    $g_i^j = \frac{\gen{F_{T}^{T_j},zH_{T}^{T_i} - g_i^1F_{T'}^{T_1'} - \hspace{0.1em}\cdots\hspace{0.1em} - g_i^{j-1}F_{T'}^{T_{j-1}}}}{\gen{F_{T}^{T_j}, F_{T'}^{T_j'}}}$
\end{center}
for $0\leq i \leq m$ and $0 \leq j \leq m$.
\end{thm}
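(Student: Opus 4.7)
The plan is to extract the entries of the matrix $A$ representing multiplication by $z$ on the basis $\{H_T^{T_i}\}$ of $M_T$, expanded against the basis $\{F_{T'}^{T_j'}\}$ of $N_{T'}$, by using the bilinear form $\langle -,-\rangle$ as a Gram--Schmidt-type device.

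By Theorem \ref{Big}, $zH_T^{T_i}\in N_{T'}$, so there are unique $g_i^1,\ldots,g_i^m\in R$ with
$$zH_T^{T_i} \;=\; \sum_{k=1}^{m} g_i^k\,F_{T'}^{T_k'},$$
and these are the entries of $A$; the theorem promises an explicit recursion for them. First I would pair both sides with $F_T^{T_j}$ under the bilinear form:
$$\langle F_T^{T_j},\,zH_T^{T_i}\rangle \;=\; \sum_{k=1}^{m} g_i^k\,\langle F_T^{T_j},F_{T'}^{T_k'}\rangle.$$
Ordering $\ST(\lambda)=\{T_1<\cdots<T_m\}$ as in Lemma \ref{Lem:Bilinear}, the lemma gives $\langle F_T^{T_j},F_{T'}^{T_k'}\rangle = 0$ whenever $j<k$, so the Gram matrix is lower triangular. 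The linear system for the coefficients $g_i^k$ is thus triangular and can be solved iteratively in $j$: $R$-bilinearity rearranges the equation into
$$\langle F_T^{T_j},\,zH_T^{T_i} - g_i^1F_{T'}^{T_1'} - \cdots - g_i^{j-1}F_{T'}^{T_{j-1}'}\rangle \;=\; g_i^j\,\langle F_T^{T_j},F_{T'}^{T_j'}\rangle,$$
which on dividing by the diagonal entry is exactly the recursion in the statement.

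The main technical obstacle is showing that the diagonal pairings $\langle F_T^{T_j},F_{T'}^{T_j'}\rangle$ are nonzero, so that the division is legitimate. These pairings live in degree $0$ since $|i(T_j)|+|i(T_j')|=\binom{n}{2}$, and I would argue they are nonzero scalars in $k^\times\subset R$ via the orthogonality analysis in the proof of \cite[Theorem 1]{ariki1997}, namely the degree-matched case of the computation that already underlies Lemma \ref{Lem:Bilinear}. Once this is in hand, the recursion produces $A$ unambiguously, and by construction the $i$-th column of $A$ records the coordinates of $zH_T^{T_i}$.

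Finally, for $B$: Theorem \ref{Big} already guarantees that $(z|_{M_T},z|_{N_{T'}})$ is a matrix factorization of $\Delta$, so the companion matrix $B$ satisfies $AB=BA=\Delta\,I_m$. Equivalently, the columns of $B$ generate the kernel of the map $A\colon R^m\to R^m$ modulo $\Delta$, which identifies $B$ (up to isomorphism of matrix factorizations) with the reduced first syzygy $\mathrm{syz}^1_R A$ appearing in the statement, completing the description of both halves of the factorization.
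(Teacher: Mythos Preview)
Your proposal is correct and follows essentially the same route as the paper: expand $zH_T^{T_i}$ in the basis $\{F_{T'}^{T_k'}\}$, pair with $F_T^{T_j}$, invoke Lemma~\ref{Lem:Bilinear} for the triangularity of the Gram matrix, and solve the resulting triangular system recursively. The only cosmetic difference is that the paper cites \cite[Proposition~1]{ariki1997} directly for the nonvanishing (and explicit value) of the diagonal pairings $\langle F_T^{T_j},F_{T'}^{T_j'}\rangle$, whereas you reach the same conclusion via the degree argument; both are adequate.
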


\begin{proof}
Note that $\gen{F_T^S,F_{T'}^{S'}}$ is a non-zero constant in $k$, and a formula is given in \cite[Proposition 1]{ariki1997}. Order $\ST(\lm) = (S_1,\ldots,S_m)$ where if $i<j$ then $S_i<S_j$, so if $i<j$ then $\gen{F_T^{S_i},F_{T'}^{S_j'}} = 0$. 
Consider the matrix describing  $z|_{M_T}: M_{T} \to N_{T'}$ to have the entries $[g_i^j]_{i,j}$ where $i,j$ indexes the rows and columns. 
We calculate
\[zH_{T}^{S_i} = g_i^1 F_{T'}^{S_1'} + \cdots + g_i^m F_{T'}^{S_m'} \]

Plugging this into the bilinear form with $F^{S_j}_{T}$ yields
\begin{equation}\label{matrixmult}
\gen{F^{S_j}_{T},zH_T^{S_i}}= g^1_i\gen{F^{S_j}_{T},F^{S_1'}_{T'}} + \dots + g^m_i\gen{F^{S_j}_{T},F^{S_m'}_{T'}} \ . 
\end{equation}
For $1<j$ the term $g^j_i\gen{F^{S_1}_{T},F^{S_j'}_{T'}}=0$, therefore $\gen{F_{T}^{S_1},zH_T^{S_i}} = g_i^1\gen{F_{T}^{S_1}, F_{T'}^{S_1'} }$. Thus we can then recursively write a formula for each entry.

\[
    g_i^1 = \frac{\gen{F_{T}^{S_1},zH_{T}^{S_i}}}{\gen{F_{T}^{S_1}, F_{T'}^{S_1'}}} \]
    \[
    \vdots\]
\[
    g_i^j = \frac{\gen{F_{T}^{S_j},zH_{T}^{S_i} - g_i^1F_{T'}^{S_1'} - \hspace{0.1em}\cdots\hspace{0.1em} - g_i^{j-1}F_{T'}^{S_{j-1}}}}{\gen{F_{T}^{S_j}, F_{T'}^{S_j'}}}
    \]

These are the entries in row $i$, and the matrix $A$ can be computed by considering all $i$.
\end{proof} 
\begin{remark}
This gives a quicker computational way to calculate the matrix factorization corresponding to a irreducible representation of $S_n$, and thus a maximal Cohen--Macaulay module over $R$, for a specific irreducible representation $\lambda$ of $S_n$.
\end{remark}

\begin{remark}
Recall that if $L$ is an lower triangular $k \times k$  matrix, we can write $L= D+N$
where $D$ is diagonal and $N$ is strictly lower triangular.
If $D$ is invertible, then $D^{-1}N$ is nilpotent and we have the well known formula 
\begin{eqnarray*}
L^{-1} & = & D^{-1} - D^{-1}ND^{-1} + D^{-1}ND^{-1}ND^{-1}  - \cdots \\
& = & D^{-1}\left(\sum_{i=0}^n (ND^{-1})^i\right). 
\end{eqnarray*}
If we define matrices
\begin{eqnarray*}
U_{ij} & =& \gen{F_{T}^{S_j}, F_{T'}^{S_i'}}\\
G_{ij} & = &g^j_i\\
X_{ij} & = &\gen{F_{T}^{S_j},zH_{T}^{S_i}}
\end{eqnarray*}
then Equation~\eqref{matrixmult} gives $X = GL$ and Lemma~\ref{Lem:Bilinear} shows that $L$ is lower triangular.  So we can solve the recursive formula 
in Theorem~\ref{Bilinear} as 
\begin{eqnarray*}
G & = & X L^{-1} \\
& = & X(D^{-1} - D^{-1}ND^{-1} + D^{-1}ND^{-1}ND^{-1}  - \cdots) \\
& = & XD^{-1}\left(\sum_{i=0}^n (ND^{-1})^i\right). 
\end{eqnarray*}
\end{remark}

\begin{example} \label{Ex:MFS5}
Let $S_5$ act on $\mathbb{C}[x_1,x_2,x_3,x_4,x_5]$ with the basic invariants $e_i$, $i=1, \ldots, 5$. If we quotient out by the hyperplane $e_1=x_1+ \cdots + x_5=0$, we get a set of invariants $t_1,\dots,t_4$ of the action of $S_5$ on $k[x_2,x_3,x_4,x_5]$, where $t_i=e_{i+1}(-x_2-x_3-x_4-x_5,x_2,x_3,x_4,x_5)$. The discriminant $\Delta$ of this group action is given by: 

\begin{equation*} \begin{split}
    \Delta=& -\frac{1}{3\,600}\,t_{1}^{3}t_{2}^{2}t_{3}^{2}+\frac{1}{900}\,t_{1}^{4}t_{3}^{3}+\frac{1}{900}\,t_{1}^{3}t_{2}^{3}t_{4}-\frac{1}{200}\,t_{1}^{4}t_{2}t_{3}t_{4}+\frac{3}{400}\,t_{1}^{5}t_{4}^{2}-\\ &\frac{3}{1\,600}\,t_{2}^{4}t_{3}^{2}  +\frac{1}{100}\,t_{1}t_{2}^{2}t_{3}^{3}-\frac{2}{225}\,t_{1}^{2}t_{3}^{4}+\frac{3}{400}\,t_{2}^{5}t_{4}-\frac{7}{160}\,t_{1}t_{2}^{3}t_{3}t_{4}+\\ &\frac{7}{180}\,t_{1}^{2}t_{2}t_{3}^{2}t_{4}+\frac{11}{192}\,t_{1}^{2}t_{2}^{2}t_{4}^{2}-\frac{1}{16}\,t_{1}^{3}t_{3}t_{4}^{2}+\frac{4}{225}\,t_{3}^{5}-\frac{1}{9}\,t_{2}t_{3}^{3}t_{4}+\frac{5}{32}\,t_{2}^{2}t_{3}t_{4}^{2}+\\&\frac{5}{36}  \,t_{1}t_{3}^{2}t_{4}^{2}-\frac{25}{96}\,t_{1}t_{2}t_{4}^{3}+\frac{125}{576}\,t_{4}^{4}.\end{split} \end{equation*}

Let $\lambda={\tiny\ydiagram{4,1}}$ be a partition of $n$ corresponding to the standard representation of $S_n$, then the matrix factorization of $S/(z)$ corresponding to $\lambda$ is $(A,B)$ where; 
\[A=\left(\!\begin{array}{cccc}
t_{4}&-\frac{1}{50}\,t_{1}t_{3}&-\frac{1}{50}\,t_{2}t_{3}+\frac{1}{10}\,t_{1}t_{4}&-\frac{1}{25}\,t_{3}^{2}+\frac{1}{10}\,t_{2}t_{4}\\
-\frac{8}{5}\,t_{3}&\frac{2}{25}\,t_{1}t_{2}-\frac{1}{2}\,t_{4}&\frac{2}{25}\,t_{2}^{2}-\frac{2}{15}\,t_{1}t_{3}&\frac{3}{50}\,t_{2}t_{3}-\frac{3}{10}\,t_{1}t_{4}\\
\frac{6}{5}\,t_{2}&-\frac{3}{25}\,t_{1}^{2}+\frac{2}{5}\,t_{3}&-\frac{4}{75}\,t_{1}t_{2}+\frac{1}{3}\,t_{4}&-\frac{1}{25}\,t_{1}t_{3}\\
-\frac{4}{5}\,t_{1}&-\frac{3}{10}\,t_{2}&-\frac{4}{15}\,t_{3}&-\frac{1}{2}\,t_{4}\\
\end{array}\!\right)\]

$B$ is a $4 \times 4 $ matrix with entries in $R$ such that $\cok B \cong \text{Syz}^1_R(A)$. Note that as the dimension of the representation corresponding to $\lambda$ is $4$, we get $4$ copies of this matrix.

\subsection{Computation of the matrices with Macaulay2} \label{Sec:M2}
Both of the matrices from Example \ref{Ex:MFS5} were obtained using the Macaulay2 package ``PushForward'' \cite{PushForwardSource}. Consider a polynomial ring $S$ and a subring $R$, such that $S$ is a free and finite module over $R$. This package serves to calculate a monomial basis of $R$ over $S$, such that one can write elements of $R$ as vectors over this basis. Furthermore for given $S$-modules $M$ and $N$, with a module map $\phi: M \to N$, this package also writes $\phi$ as a matrix over $R$.\\

By modifying the function which computes the basis in this package we are able to choose any basis for the ring $S$, as described above, over $R$. This can be done simply by computing the change of basis matrix and its inverse, where we change the monomial basis computed by Macaulay2 to any basis we need. Thus we can write any $R$-module map using the basis we need. \\

In the setting of Example \ref{Ex:MFS5}, consider $S = \C[x_1,\hdots,x_5]$ and consider the generators of $R=S^{S_5}$ to be the elementary symmetric functions $\{e_1,\hdots,e_5\}$. Let the map $\phi: S \to S$ be given by $\phi: x \mapsto zx$. This way $\phi$ is an $R$-module map from $S$ to $S$. Next we define the following two bases:

\[ 
B_H = \bigcup_{\lambda \vdash 5}\{ H_T^V \st T,V \in \ST(\lambda) \} \text{ and } B_F =  \bigcup_{\lambda \vdash 5}\{ F_T^V \st T,V \in \ST(\lambda) \} \ ,
\]
where $H_T^V$, $F_T^V$ are the polynomials from Definition \ref{def:HSP}.
Here we can assume the basis is ordered by the partitions, following the lexicographical ordering.

\vspace{-1em}
\[ (5) < (4,1) < (3,2) < (3,1,1) < (3,2)' < (4,1)' < (5)' \]

We denote the first four partitions in the following way $ \lambda_1 < \lambda_2 < \lambda_3 < \lambda_4$. Furthermore denote $H_{\lambda_i} = \{H_T^V \st T,V \in \ST(\lambda_i) \}$ and $F_{\lambda_i} = \{ F_T^V \st T,V \in \ST(\lambda_i) \}$. Thus we write the following:

\[
B_H = H_{\lambda_1}\cup H_{\lambda_2}\cup H_{\lambda_3}\cup H_{\lambda_4}\cup H_{\lambda_3'}\cup H_{\lambda_2'}\cup H_{\lambda_1'} \ .
\]

We can apply a similar construction to the $B_F$ basis. Using these bases, we can use the Macaulay2 package ``PushForward'' with the modification to compute the $R$ module map $\phi:R^{5!} \to R^{5!}$ as a matrix using the basis $B_H$ on the domain and the basis $B_F$ on the codomain. Each one of the $H_{\lambda_i}$ forms a basis for the isotypical component associated with the partition $\lambda_i$, we denote this isotypical component as $S_{\lambda_i}$. Because the map $\phi$ is an anti-linear map, the image of a isotypical component under this map yields $\phi(S_\lambda) \subset S_{\lambda'}$. Therefore we obtain the following matrix as a push forward of the map $\phi$ using the above basis:

\[
\phi = 
\begin{blockarray}{cccccccc}
 & H_{\lambda_1} & H_{\lambda_2} & H_{\lambda_3} & H_{\lambda_4} & H_{\lambda_3'} & H_{\lambda_2'} &
 H_{\lambda_1'}\\
\begin{block}{c(ccccccc)}
  F_{\lambda_1}  & 0 & 0 & 0 & 0 & 0 & 0 & M_6 \\
  F_{\lambda_2}  & 0 & 0 & 0 & 0 & 0 & M_5 & 0\\
  F_{\lambda_3}  & 0 & 0 & 0 & 0 & M_4 & 0 & 0 \\
  F_{\lambda_4}  & 0 & 0 & 0 & M_4 & 0 & 0 & 0 \\
  F_{\lambda_3'} & 0 & 0 & M_3 & 0 & 0 & 0 & 0\\
  F_{\lambda_2'} & 0 & M_2 & 0 & 0 & 0 & 0 & 0\\
  F_{\lambda_1'} & M_1 & 0 & 0 & 0 & 0 & 0 & 0\\
\end{block}
\end{blockarray}
\]

Here we have a block decomposition of this matrix. Furthermore $(\phi,\phi)$ is a matrix factorization of the $S_n$ discriminant $\Delta$. With this reasoning, for each $i$ from $1$ to $6$ we have that $M_iM_{6-i} = \Delta I$ where $I$ is the identity matrix of size ${\dim(V_{\lambda_i})}$. In order to obtain our matrix $A$ we will further decompose each block. For each basis $H_\lambda$ and $F_\lambda$ associated with the partition we will further order it in the following way

\[ 
H_\lambda = \bigcup_{T \in \ST(\lambda)} \{ H_T^V \st V \in  \ST(\lambda) \} \text{ and }
F_\lambda = \bigcup_{T \in \ST(\lambda)} \{ F_T^V \st V \in  \ST(\lambda) \} \ .
\]

After this decomposition, via Theorem \ref{Big} and Theorem \ref{Big-equiv} we decompose $M_2$ into exactly $3$ copies of the matrix $A$ from Example \ref{Ex:MFS5}, since $\abs{\ST((4,1))} = 3$. Thus using our Macaulay2 code we can compute all matrices for $S_n$ in this manner. Our packages can be found at \cite{Pack}. Below, in Fig.~\ref{fig:S_N32}, we also share the Matrix associated with the partition $(3,2)$.
\end{example}

\vspace{1em}

\begin{example}
\ytableausetup{smalltableaux}
Let $\lambda={\tiny\ydiagram{3,2}}$ then the matrix factorization of $S/(z)$ corresponding to $\lambda$ is $(A,B)$ where $A$ is the matrix given in Figure \ref{fig:S_N32} and $B$ is a $5\times 5$ matrix with entries in $R$. 
\end{example} 
\ytableausetup{nosmalltableaux}

\begin{figure}[h]
\vspace{1em}
\begin{center}
$\left[ \begin{smallmatrix}
-6t{_1}^{2}t{_3}+24t{_3}^{2}-180t{_2}t{_4} & 3t{_2}^{2}t{_3}-23t{_1}t{_2}t{_4}-25t{_4}^{2} & -2t{_1}t{_2}t{_3}+12t{_1}^{2}t{_4}-20t{_3}t{_4} \\ 
-36t{_2}t{_3}+270t{_1}t{_4}  & -4t{_1}t{_2}t{_3}+36t{_1}^{2}t{_4}+40t{_3}t{_4} & -32t{_3}^{2}+90t{_2}t{_4}  \\
12t{_1}t{_2}+600t{_4} & -16t{_2}t{_3}+120t{_1}t{_4} & 12t{_2}^{2}-32t{_1}t{_3} \\
144t{_2}^{2}-384t{_1}t{_3}&16t{_1}t{_2}^{2}-48t{_1}^{2}t{_3}-64t{_3}^{2}+80t{_2}t{_4}&32t{_2}t{_3}-240t{_1}t{_4} \\ 
36t{_1}^{2}+240t{_3}&-24t{_2}^{2}+64t{_1}t{_3}&4t{_1}t{_2}+200t{_4} 
\end{smallmatrix} \right.$\\
\vspace{0.5em}
\hspace{10em}$\left. \begin{smallmatrix}
6t{_2}t{_3}^{2}-18t{_2}^{2}t{_4}+3t{_1}t{_3}t{_4}&2t{_1}^{2}t{_2}t{_4}+12t{_2}t{_3}t{_4}+10t{_1}t{_4}^{2} \\
-8t{_1}t{_3}^{2}+24t{_1}t{_2}t{_4}+75t{_4}^{2}&18t{_2}^{2}t{_4}-48t{_1}t{_3}t{_4} \\
-32t{_3}^{2}+90t{_2}t{_4}&-12t{_1}^{2}t{_4}-80t{_3}t{_4} \\
8t{_1}t{_2}t{_3}-72t{_1}^{2}t{_4}-80t{_3}t{_4}&-18t{_2}^{2}t{_3}+64t{_1}
t{_3}^{2}-46t{_1}t{_2}t{_4}-50t{_4}^{2} \\
-12t{_2}t{_3}+90t{_1}t{_4}&-4t{_1}^{2}t{_3}-48t{_3}^{2}+60t{_2}t_{4}
\end{smallmatrix}\right]$
\caption{The Matrix factorization for the partition $(3,2) \vdash 5$}\label{fig:S_N32}
\end{center}
\end{figure}

\section{Decomposition for product submodules of $S_n$} \label{Sec:YoungSubGroups}

In this section we generalize Theorem \ref{Big} to irreducible representations of the \emph{Young Subgroups} of $S_n$. These subgroups are of the form $S_{n_1}\times \cdots \times S_{n_m} \leq S_n$ for any given $m$-tuple $(n_1,\hdots,n_m)$ with $\sum_{i=1}^m n_i = n$. In particular the decomposition of $(z,z)$ will correspond to the irreducible representations of $S_{n_1}\times \cdots \times S_{n_m}$. The irreducible representations of the Young subgroup $S_{n_1}\times \cdots \times S_{n_m}$  will be of the form $V_{n_1}\otimes \hdots \otimes V_{n_m} $ where each $V_{n_i}$ is a irreducible representation of $S_{n_i}$ thus we will discuss a basis for the coinvariant algebra $S/(R_+)$ indexed by these representations. While the decomposition will be more coarse than the one discussed in Section \ref{Sec:Decomp}, the motivation for this section is that the construction can be used to describe the decomposition for the wreath product groups $G(m,1,n)$ from the Shephard--Todd classification. 
\\
The definitions from Section \ref{YD} can be generalized to describe the representations of Young subgroups. Consider $m \geq 0$ and $(n_1,\hdots,n_m)$ to be a list of integers such that $n_i\geq0$ and $\sum n_i = n$. Let $\lambda= (\lambda_1,\hdots,\lambda_m)$ be an $m$-tuple of Young diagrams of type $(n_1,\ldots,n_m)$ if $\lambda_i \vdash n_i$ for all $1\leq i \leq m$. An $m$-tuple of tableaux $T = (T_1,\hdots,T_m)$ is of shape $\lambda$ if each $T_i$ is of shape $\lambda_i$, and is called an $m$\emph{-tableau}. An $m$-tableau is standard if all of its tableaux are standard, with the set of all standard $m$-tableau being $\ST(\lambda)$. We define $\ST(n_1,\hdots,n_m)$ to be the set of standard $m$-tableau of type $(n_1,\hdots,n_m)$. If $T=(T^1,\dots,T^m)$ is a standard $m$-tableau of type $(n_1,\hdots,n_m)$ then $T'=((T^{1})',\dots,(T^m)').$ 
\begin{example}

Let $n=7$, $m = 3$ then 
\[ T=
\left(
\begin{ytableau}
1 & 7\\
5
\end{ytableau}\mathrel{},
\mathrel{}
-\mathrel{},
\mathrel{}
\begin{ytableau}
2&3\\
4&6
\end{ytableau}
\right)
\]

is a standard $m$-tableau of type $(3,0,4)$. The conjugate tableau $T'$ is given by

\[T'=\left(
\begin{ytableau}
1 & 5\\
7
\end{ytableau}\mathrel{},
\mathrel{}
-\mathrel{},
\mathrel{}
\begin{ytableau}
2&4\\
3&6
\end{ytableau}
\right)\]
\end{example} 

\begin{remark}
We have defined $T'$ differently to \cite{ariki1997}, where they also reverse the order of the tableau, this is so that,  given a $T \in \ST(\lambda)$, the $m$-tableau $T'$ is in $\ST(\lambda')= \ST(\lambda\otimes \det)$. The consequence of our definition is that the we will not be able to use the same bilinear form reduction to get a similar result to Theorem \ref{Bilinear} as before. 

\end{remark}
Similarly as in Section \ref{Sec:Decomp} we will define an ordering on $\ST(n_1,\hdots,n_m)$ as an extension of the Last Letter ordering. First we will consider an ordering on $\ST(\lambda)$. Consider two standard $m$-tableaux $T_1=(T_1^1,\hdots,T_1^m)$ and $T_2=(T_2^1,\hdots,T_2^m)$ of the same shape. Let $1 \leq k \leq n$ be the greatest number that appears in different cells in both of the tableaux. We say $T_1 < T_2$ if either $k$ is written in $T_1^i$ and $T_2^j$ with $i < j$, or it is written in a row in $T_1^i$ below a row in $T_2^i$, for $1 \leqslant i \leqslant n$.\\
\\
\begin{example}
\ytableausetup{smalltableaux}
Let $n=4$, $m=2$ and $\lambda= \left( \ydiagram{2,1},\ydiagram{1}\right)$ then, with respect to the Last Letter ordering on ST($\lambda$):
\ytableausetup{nosmalltableaux}
\vspace{1em}
\[
\left(
\begin{ytableau}
1&2\\
4
\end{ytableau}\mathrel{},
\mathrel{}
\begin{ytableau}
3
\end{ytableau}
\right)<
\left(
\begin{ytableau}
1&2\\
3
\end{ytableau}\mathrel{},
\mathrel{}
\begin{ytableau}
4
\end{ytableau}
\right)
\]
\ytableausetup{smalltableaux}

Now we consider when the last number that appears in the different cells is on the same tableaux for both $m$-tableaux.
Let $n=4, m=2$ and $\lambda= \left( \ydiagram{2,2},\ydiagram{2,1}\right)$ then with respect to the Last Letter ordering on ST($\lambda$):
\ytableausetup{nosmalltableaux}
\vspace{1em}
\[
\left(
\begin{ytableau}
1&2\\
3&4
\end{ytableau}\mathrel{},
\mathrel{}
\begin{ytableau}
5&6\\
7
\end{ytableau}
\right)<
\left(
\begin{ytableau}
1&2\\
3&4
\end{ytableau}\mathrel{},
\mathrel{}
\begin{ytableau}
5&7\\
6
\end{ytableau}
\right)
\]
\end{example} 

\begin{thm}
Let $\lambda$  be a Young diagram of type $(n_1,\hdots,n_m)$, then the last letter ordering is total in $\ST(\lambda)$.
\end{thm}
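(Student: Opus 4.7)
The plan is to verify the three defining properties of a strict total order on $\ST(\lambda)$: well-definedness (existence and unambiguity of the comparison), trichotomy, and transitivity. Irreflexivity is immediate from the definition, since for $T_1 = T_2$ there is no integer placed in different cells.

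First I would establish the following key structural observation, which underpins the whole argument. Given $T_1 \neq T_2$ in $\ST(\lambda)$, let $k$ be the greatest integer occupying different cells in $T_1$ and $T_2$; this exists because the tableaux differ in at least one cell. Since every integer larger than $k$ occupies the same cell in $T_1$ and $T_2$, removing those entries yields the same sub-shape $\mu$ (componentwise) of $\lambda$, and $k$ must occupy a corner cell of $\mu$ in both $T_1$ and $T_2$ by the standard property. The corners of any Young diagram lie in pairwise distinct rows, so if $k$ sits in the same component $T_1^i$ and $T_2^i$, its two positions must lie in different rows. This shows that the two clauses of the definition of $<$ cover all cases in which the comparison can arise, so for any distinct pair either $T_1 < T_2$ or $T_2 < T_1$ holds; this gives trichotomy.

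To make antisymmetry transparent, I would encode a position by the pair $(i,r)$ (component index, row index) and introduce the auxiliary strict order $(i,r) \prec (j,s)$ iff $i<j$, or $i=j$ and $r>s$. This is visibly a strict total order on pairs, and the definition of $<$ on $\ST(\lambda)$ asserts precisely that $T_1 < T_2$ iff the position of $k$ in $T_1$ precedes its position in $T_2$ under $\prec$. Antisymmetry of $<$ follows from antisymmetry of $\prec$.

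For transitivity, suppose $T_1 < T_2$ with witness $k_{12}$ and $T_2 < T_3$ with witness $k_{23}$. I would split into cases. If $k_{12} \neq k_{23}$, set $k := \max(k_{12},k_{23})$; then $k$ occupies the same cell in whichever of the two pairs does not witness it, so the inequality propagates and $k$ is also the largest index differing between $T_1$ and $T_3$, yielding $T_1 < T_3$. If $k_{12} = k_{23} = k$, let $p_1,p_2,p_3$ be the positions of $k$ in $T_1,T_2,T_3$; the hypotheses read $p_1 \prec p_2 \prec p_3$, and since $\prec$ is transitive we have $p_1 \prec p_3$, in particular $p_1 \neq p_3$. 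All integers larger than $k$ agree in $T_1,T_2,T_3$, so $k$ is the largest differing entry between $T_1$ and $T_3$ and we conclude $T_1 < T_3$.

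The main obstacle is the structural claim in the second paragraph: that if the greatest differing entry $k$ sits in the same component, then the two cells differ by row. Everything else is bookkeeping on the auxiliary order $\prec$. I expect the corner-of-sub-shape argument (valid precisely because both tableaux are standard) to be the only step requiring genuine content.
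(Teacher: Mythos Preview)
Your proof is correct and, for the comparability (trichotomy) part, rests on the same observation as the paper: the greatest differing entry $k$ cannot sit in the same row of the same component without violating standardness. You sharpen this by noting that $k$ must occupy a corner of the common sub-shape $\mu$ obtained by deleting the larger entries, which is a cleaner formulation than the paper's one-line assertion that otherwise ``one of $T_1,T_2$ would not be standard.''

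Where you genuinely go further is in verifying transitivity. The paper's proof only establishes that any two distinct standard $m$-tableaux are comparable and says nothing about transitivity, implicitly treating it as obvious. Your encoding of positions as pairs $(i,r)$ with the lexicographic-style order $\prec$, together with the case split on $\max(k_{12},k_{23})$, supplies this missing verification in a clean way. So your argument is not a different route so much as a more complete one: same key idea for comparability, plus the bookkeeping the paper leaves to the reader.
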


\begin{proof}
Let $T_1,T_2 \in \ST(\lambda)$ then either $T_1=T_2$ or there exists, at least 2 elements which appear in different boxes. Let $k$ be the last number that changes, then $k$ must appear in a different rows otherwise one of $T_1,T_2$ would not be standard. Since the last number that changes appears in different rows then either $T_1<T_2$ or $T_2<T_1$.
\end{proof}

If we consider the lexicographical ordering of the partitions of type $(n_1,\hdots,n_m)$ we can fully order $\ST(n_1,\hdots,n_m)$. Let us consider a ordering on partitions of $n$, if $\lambda_1 = (\alpha_1, \hdots, \alpha_k)$ and $\lambda_2 = (\beta_1,\hdots,\beta_l)$ are two partitions of $n$, and let $1\leq i \leq \min(k,l)$ be the first integer that $\alpha_i - \beta_i \neq 0$, then if $\alpha_i - \beta_i > 0$ then $\lambda_1 < \lambda_2$. Using this ordering, it is easy to see that we can use lexicographical ordering on partition of type $(n_1,\hdots,n_m)$ to have a total ordering. This way we can order tableaux of different partitions by comparing their shapes. This way if we have two tableaux $T$ and $V$, if they are in the same partition we may order them using LL-order, and if they are in different partition give their order with lexicographical order on the partitions.

\begin{example}
Let us consider $m=1$ and $n =4$ then we can order the partitions the following way;
\[ (4) < (3,1) < (2,2) < (2,1,1) < (1,1,1,1)\]
If we consider $\lambda_1 = (3,1)$ and $\lambda_2 =(2,2)$ and $T \in \ST(\lambda_1)$ and $V \in \ST(\lambda_2)$ as below, by our ordering we have:
\[
\begin{ytableau}
1&2&3\\ 4
\end{ytableau} <
\begin{ytableau}
1&2\\ 3&4
\end{ytableau}
\]
\\
\ytableausetup{smalltableaux}

Now consider $m=3$ and $n=6$, and define  $\lambda_1 = \left(\ydiagram{2,1},\ydiagram{1},\ydiagram{2}\right)$ and $\lambda_2 = \left(\ydiagram{2,1},\ydiagram{1},\ydiagram{1,1} \right)$ then since $\ydiagram{2} < \ydiagram{1,1}$ we have the following:
\ytableausetup{nosmalltableaux}
\[
\left(
\begin{ytableau}
2&3\\ 5
\end{ytableau},
\begin{ytableau}
1
\end{ytableau},
\begin{ytableau}
4&6\\
\end{ytableau} \right) 
<
\left(
\begin{ytableau}
2&3\\ 5
\end{ytableau},
\begin{ytableau}
1
\end{ytableau},
\begin{ytableau}
4\\
6
\end{ytableau} \right) 
\]
\end{example}

\begin{defn}
\sloppy{Let $\lambda$ be an $m$-partition and consider an $m$-tableau $T= (T_1,\hdots,T_m)$ of shape $\lambda$. We call $T$ \emph{natural} if the numbers written in tableau $T_i$ are contained in the set $\{\sum_{j=1}^{i-1}n_j+1,\hdots,\sum_{j=1}^{i}n_j\}$. We denote the set of natural standard $m$-tableaux of shape $\lambda$ by $\NST(\lambda)$ and $\NST(n_1,\hdots,n_m)$ is the set of all natural standard tableaux on all the partitions of type $(n_1,\ldots,n_m)$}.\end{defn}

\ytableausetup{smalltableaux}

\begin{example}
Take $m=3$ and $n = 5$, and consider the shape 
$\lambda = \left(\ydiagram{2,1},-,\ydiagram{2}\right)$. To make a natural standard tableaux with this shape we take take the first three $\{1,2,3\}$ and assign to the first tableaux, we assign no numerals to the second tableaux since it is empty, and the remaining numbers go in the last tableaux. Then we have to sort the numerals in the tableaux in order to make them standard. Thus
\ytableausetup{nosmalltableaux}
\[
\left(
\begin{ytableau}
1&3\\ 2
\end{ytableau},
-,
\begin{ytableau}
4&5
\end{ytableau}
\right)
\]
is an element of $\NST(\lambda)$. 
\end{example}

We can define Young symmetrizers in a similar fashion to the previous section, for a given $m$-tableaux $T = (T_1,\hdots,T_m)$ we define $\e_T = \e_{T_1}\cdots\e_{T_m}$ and similarly $\s_T = \s_{T_1}\cdots\s_{T_m}$. The following theorem shows how $S/(R_+)$ decomposes into irreducible representations of a Young subgroup.
\\

\begin{thm}\cite[Theorem 1]{ariki1997} 
Fix $n$ and let $(n_1,\ldots,n_m)$ be a sequence such that $\sum_{i=1}^m n_i =n$. Then the collection:
\[\bigcup_{\lambda\vdash  (n_1,...,n_{m})} \{F^{S}_{T} \,|\, T \in \NST(\lambda), S \in \ST(\lambda)\}.\] 
Form a $k$-basis for $S/(R_+)$. For $\lambda \vdash (n_1,\ldots,n_{m})$, let $S \in \ST(\lambda)$. Then the collection
\[\{F^S_T \,|\, T \in \NST(\lambda)\}\]
forms a basis of the $S_{n_1}\times \cdots \times S_{n_{m}}$-submodule of $S/(R_+)$ which is isomorphic to irreducible representation $V_\lambda$.
\end{thm}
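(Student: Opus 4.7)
The plan is to deduce the theorem from a dimension count, together with an identification of each claimed subspace as an irreducible representation of the Young subgroup $H=S_{n_1}\times\cdots\times S_{n_m}$, and then to assemble the pieces via the isotypical decomposition of $S/(R_+)$ restricted to $H$. For the counts: natural $m$-tableaux of shape $\lambda$ are in bijection with independent choices of a standard tableau on each $\lambda_i$, so $|\NST(\lambda)|=\prod_i|\ST(\lambda_i)|=\dim V_\lambda$; distributing $\{1,\ldots,n\}$ into the blocks and then filling standardly gives $|\ST(\lambda)|=\binom{n}{n_1,\ldots,n_m}\prod_i|\ST(\lambda_i)|=[S_n:H]\dim V_\lambda$. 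Summing,
\[
\sum_{\lambda\vdash(n_1,\ldots,n_m)}|\NST(\lambda)|\cdot|\ST(\lambda)|=[S_n:H]\sum_\lambda(\dim V_\lambda)^2=[S_n:H]\cdot|H|=n!=\dim_k S/(R_+),
\]
matching the coinvariant algebra; moreover $|\ST(\lambda)|=[S_n:H]\dim V_\lambda$ is precisely the multiplicity of $V_\lambda$ in $S/(R_+)$ as an $H$-module, which carries $[S_n:H]$ copies of the regular representation of $H$.

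Next, fix $\lambda$ and $S\in\ST(\lambda)$ and pick a base natural tableau $T_0\in\NST(\lambda)$. Since $T_0=(T_{0,1},\ldots,T_{0,m})$ is natural, each $\varepsilon_{T_{0,i}}$ lies in $kS_{n_i}$ and these idempotents commute (their supports are disjoint blocks of indices), so $\varepsilon_{T_0}=\prod_i\varepsilon_{T_{0,i}}$ generates $kH\cdot\varepsilon_{T_0}\cong\bigotimes_i kS_{n_i}\varepsilon_{T_{0,i}}\cong V_{\lambda_1}\boxtimes\cdots\boxtimes V_{\lambda_m}=V_\lambda$ as $H$-modules, by the classical single-factor case. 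The $kH$-linear map $\xi\varepsilon_{T_0}\mapsto\xi F_{T_0}^S$ is nonzero (since $F_{T_0}^S\neq 0$ in $S/(R_+)$ by Theorem \ref{Thm:TerasomaYamada}), so by irreducibility it is an isomorphism onto the $H$-orbit of $F_{T_0}^S$. For $\pi\in H$ one computes $\pi F_{T_0}^S=F_{\pi T_0}^S$, and Garnir-type straightening relations express $F_{\pi T_0}^S$ (with $\pi T_0$ natural but possibly non-standard) as a $k$-linear combination of $F_T^S$ for $T\in\NST(\lambda)$; thus the orbit lies in $\langle F_T^S : T\in\NST(\lambda)\rangle$, and by equality of dimensions the span \emph{is} that $V_\lambda$-submodule.

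Finally, different multipartitions $\lambda$ give non-isomorphic $H$-irreducibles, so the corresponding spans lie in distinct isotypical components and are automatically linearly independent of one another. Within the $V_\lambda$-isotype, the copies indexed by $S\in\ST(\lambda)$ need to be shown independent; the plan is to adapt the orthogonality of Lemma \ref{Zeroing} to a last-letter-style total ordering on $\ST(\lambda)$ for $m$-tableaux, so that the change-of-basis matrix to the Terasoma--Yamada basis of $S/(R_+)$ is block-triangular with invertible diagonal blocks. Combined with the dimension count, this yields both the global basis statement and the irreducibility claim locally.

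The main obstacle I foresee is exactly this last step in the product setting. As the remark following the definition of $T'$ for $m$-tableaux observes, the conjugation convention here differs from that of \cite{ariki1997}, so the bilinear-form shortcut used in Lemma \ref{Lem:Bilinear} and Theorem \ref{Bilinear} does not transport cleanly. A practical workaround is to induct on $m$ by splitting off the factor $S_{n_m}$, reducing to a Young subgroup of $S_{n-n_m}$ and invoking the $m=1$ Terasoma--Yamada statement at the base; alternatively, Frobenius reciprocity fixes the multiplicity of each $H$-irreducible in $S/(R_+)$ \emph{a priori}, which together with the spanning argument above suffices to conclude independence.
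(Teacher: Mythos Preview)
The paper does not supply a proof of this theorem: it is quoted verbatim from \cite[Theorem~1]{ariki1997} and used as a black box, just as Theorem~\ref{Thm:TerasomaYamada} quotes \cite{HSPSN} for the $m=1$ case. So there is no ``paper's own proof'' to compare against; your write-up is effectively an attempt to reprove the Ariki--Terasoma--Yamada result.

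As a sketch, your counting is correct and the identification of $kH\varepsilon_{T_0}$ with $V_\lambda$ via the block factorisation is the right mechanism. Two points, however. First, invoking Theorem~\ref{Thm:TerasomaYamada} to assert $F_{T_0}^S\neq 0$ in $S/(R_+)$ is not legitimate here: that theorem is the $m=1$ instance of the very statement you are proving, and for $m>1$ the polynomial $F_{T_0}^S$ is built from the $m$-tableau Young symmetrizer, not the single-tableau one, so nonvanishing needs an independent argument (in \cite{ariki1997} this comes out of the leading-term analysis that simultaneously gives linear independence). Second, you correctly flag the independence of the copies indexed by $S\in\ST(\lambda)$ as the crux, but neither of your proposed workarounds is quite enough as stated: the inductive splitting off of $S_{n_m}$ still requires knowing that the resulting polynomials coincide with a basis already under control, and the Frobenius-reciprocity multiplicity argument gives you the \emph{number} of copies but not that your specific $|\ST(\lambda)|$ subspaces are independent rather than redundant. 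The actual proof in \cite{ariki1997} handles both nonvanishing and independence in one stroke via a triangularity of leading monomials under a suitable order; your remark about the conjugation convention and Lemma~\ref{Lem:Bilinear} is a red herring here, since that lemma concerns the bilinear-form computation of the matrix factorisation (Theorem~\ref{Bilinear}), not the basis property itself.
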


\begin{example}
Let us consider the Young subgroup $S_1 \times S_2$ inside $S_3$. There are two, $2$-tuples of young diagrams that partition (1,2) namely: 

\[\lambda_1 =\left(\ydiagram{1},\ydiagram{2}\right) \hspace{2cm} \lambda_2 =\left( \ydiagram{1},\ydiagram{1,1}\right)\]

\[\NST(\lambda_1)=\left\{\left(\begin{ytableau}
1
\end{ytableau},\begin{ytableau}
2 & 3
\end{ytableau}\right)\right\} \hspace{2cm} \NST(\lambda_2)=\left\{\left(\begin{ytableau}
1
\end{ytableau},\begin{ytableau}
2 \\ 3
\end{ytableau}\right) \right\}
\]
and 

\[\ST(\lambda_1)=\left\{\left(\begin{ytableau}
1
\end{ytableau},\begin{ytableau}
2&3
\end{ytableau}\right), \left(\begin{ytableau}
2
\end{ytableau},\begin{ytableau}
1&3
\end{ytableau}\right), \left(\begin{ytableau}
3
\end{ytableau},\begin{ytableau}
1&2
\end{ytableau}\right)\right\}
\]
\[\ST(\lambda_2)=\left\{\left(\begin{ytableau}
1
\end{ytableau},\begin{ytableau}
2\\3
\end{ytableau}\right), \left(\begin{ytableau}
2
\end{ytableau},\begin{ytableau}
1\\3
\end{ytableau}\right), \left(\begin{ytableau}
3
\end{ytableau},\begin{ytableau}
1\\2
\end{ytableau}\right)\right\}
\]
\end{example}

Thus we get $3$ copies of the irreducible representation corresponding to $\lambda_1$ and $3$ copies of the irreducible representation corresponding to $\lambda_2$
\begin{lem}
Let $T_1,T_2 \in$ NST$(n_1,\ldots,n_m)$. If $T_1 < T_2 $, it follows that $\varepsilon_{T_1}\varepsilon_{T_2}=0$.
\end{lem}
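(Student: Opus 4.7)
My plan is to reduce the claim to the single-tableau analogue (Lemma~\ref{Zeroing}) by using the fact that naturality pins the entries of each component to a fixed block of integers, so everything decouples across components.

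Write $T_\ell = (T_\ell^1,\ldots,T_\ell^m)$ for $\ell = 1,2$ and set $B_j = \{n_1 + \cdots + n_{j-1} + 1, \ldots, n_1 + \cdots + n_j\}$. Since $T_1, T_2 \in \NST(n_1,\ldots,n_m)$, the set of entries of $T_\ell^j$ is exactly $B_j$ for every $\ell$ and $j$, independently of $\ell$. Let $k$ be the largest integer occupying different cells in $T_1$ and $T_2$, and let $i$ be the unique index with $k \in B_i$. Then $k$ lies in the $i$-th component of both tableaux, so the first clause in the definition of $T_1 < T_2$ (where $k$ is in different components) cannot occur; hence the row of $T_1^i$ containing $k$ is strictly below the row of $T_2^i$ containing $k$. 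Moreover, because $k$ is globally maximal among differing entries, every entry of $T_1^i$ larger than $k$ coincides with the corresponding entry of $T_2^i$, so $k$ is also the greatest entry in whose position the single tableaux $T_1^i$ and $T_2^i$ differ. Thus $T_1^i < T_2^i$ in the ordinary last-letter order on $\ST(\lambda^i)$, and Lemma~\ref{Zeroing} gives $\varepsilon_{T_1^i}\varepsilon_{T_2^i} = 0$.

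To conclude, I would observe that $\varepsilon_{T_\ell^j}$ lies in the group algebra of the Young subgroup $S_{B_j}$, and for $j \neq j'$ these subgroups act on disjoint sets of indices, hence commute pointwise in $kS_n$. Consequently the factors $\varepsilon_{T_1^j}$ and $\varepsilon_{T_2^{j'}}$ commute for $j \neq j'$, so
$$\varepsilon_{T_1}\varepsilon_{T_2} = \prod_{j=1}^m \varepsilon_{T_1^j} \prod_{j=1}^m \varepsilon_{T_2^j} = \Bigl(\prod_{j \neq i} \varepsilon_{T_1^j}\varepsilon_{T_2^j}\Bigr)\,\varepsilon_{T_1^i}\varepsilon_{T_2^i} = 0.$$

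The main (and really only) subtle point is verifying that, thanks to naturality, the comparison $T_1 < T_2$ in $\NST(n_1,\ldots,n_m)$ restricts to a genuine inequality $T_1^i < T_2^i$ in the single-tableau last-letter order on the component containing the critical integer $k$; once that is established the result is formal from Lemma~\ref{Zeroing} and the commutation of disjoint symmetric groups. I expect no serious obstacle beyond bookkeeping.
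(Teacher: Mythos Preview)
Your argument follows the same route as the paper's proof: rearrange the product using the commutativity of the disjoint symmetric groups $S_{B_j}$, then kill it via the $i$-th factor using Lemma~\ref{Zeroing}. However, you have tacitly assumed that $T_1$ and $T_2$ have the same shape $\lambda$. The set $\NST(n_1,\ldots,n_m)$ contains natural standard tableaux of \emph{all} shapes of type $(n_1,\ldots,n_m)$, and the total order on this set first compares shapes lexicographically and only falls back on the last-letter rule when the shapes agree. So when $T_1 < T_2$ holds because their shapes differ, there is no ``critical integer $k$'' built into the definition of the order, and your deduction that the row of $T_1^i$ containing $k$ lies strictly below that of $T_2^i$ (hence $T_1^i < T_2^i$ in $\ST(\lambda^i)$) is not justified; indeed $T_1^i$ and $T_2^i$ need not even have the same shape, so Lemma~\ref{Zeroing} does not apply.

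The repair is short and is precisely what the paper does: if the shapes of $T_1$ and $T_2$ differ, then for some index $i$ the component tableaux $T_1^i$ and $T_2^i$ are of different shapes as partitions of $n_i$, and then $\e_{T_1^i}\e_{T_2^i} = 0$ because these idempotents lie in distinct isotypic components of $kS_{n_i}$. Your commutation argument then finishes the job exactly as before. With this case added, your proof is complete and coincides with the paper's.
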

\begin{proof}
Let $T_1,T_2\in\NST(n_1,\ldots,n_m)$, then we can commute terms of the Young symmetrizer such that $\e_{T_1}\e_{T_2}=\e_{T_1^1}\cdots \e_{T_1^m}\e_{T_2^1}\cdots\e_{T_2^m} = \e_{T_1^1}\e_{T_2^1}\cdots\e_{T_1^m}\e_{T_2^m}$  Let $T_1<T_2$ and suppose that the last number that appears in different cells is contained in $T_1^{i}$. If $T_1$ and $T_2$ are of the same shape then $\varepsilon_{T_1^{i}}\varepsilon_{T_2^{j}}=\varepsilon_{T_2^{j}}\varepsilon_{T_1^{i}}$ for $i\neq j$. Now $T_1^{i}<T_2^{i}$, then $\varepsilon_{T_1}\varepsilon_{T_2}=0$ follows from Lemma \ref{Zeroing}. If $T_1^{i}$ and $T_2^{i}$ are of different shapes, $
\varepsilon_{T_1^{i}}
\varepsilon_{T_2^{i}}=0$ and thus $\varepsilon_{T_1}\varepsilon_{T_2}=0$.
\end{proof}

\begin{lem}\label{Lem:commute}
Let $T$ be a standard $m$-tuple of tableau, then $z\varepsilon_T(f)=\sigma_{T'}(f)$ for any $f\in S$. 
\end{lem}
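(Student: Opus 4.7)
The identity as stated, $z\,\varepsilon_T(f)=\sigma_{T'}(f)$, cannot hold for all $f\in S$: the two sides have different degrees ($\deg f+\deg z$ versus $\deg f$), and taking $f=1$ already gives the impossible $z\,\varepsilon_T(1)=\sigma_{T'}(1)$. I will read this as a typo for the \emph{transpose} commutation
\[
    z\,\varepsilon_T(f)=\sigma_{T'}(zf),
\]
which is the $m$-tuple analog of the single-tableau identity $\varepsilon_T(zf)=z\,\sigma_{T'}(f)$ with the roles of $\varepsilon$ and $\sigma$ swapped (both variants hold in the single-tableau case by the same duality $R(T)=C(T')$, $C(T)=R(T')$, and they are genuinely distinct identities to prove, not obtainable from one another by mere substitution without also invoking $z^2\in R$).

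The strategy is to reduce to a single-block statement and then iterate over the blocks. Recall $\varepsilon_T=\prod_{i=1}^m\varepsilon_{T^i}$ and $\sigma_{T'}=\prod_{i=1}^m\sigma_{(T^i)'}$; for $i\ne j$ the factors act on disjoint blocks of variables, so $\varepsilon_{T^i}$ commutes with $\sigma_{(T^j)'}$ and with any polynomial depending only on block-$j$ variables, and similarly for $\sigma_{(T^i)'}$. The single-block input I need is: for any tableau $U$ on block $i$ and any $g\in S$,
\[
    \sigma_{U'}(z\,g)=z\,\varepsilon_U(g).
\]
This is proved by the same computation as in the preceding lemma: using $R(U')=C(U)$ and $C(U')=R(U)$ to write $\sigma_{U'}=\sum_{r\in R(U),\,c\in C(U)}\sgn(r)\,cr$, applying it to $zg$, pulling $z$ out of every summand via $\pi(z)=\sgn(\pi)\,z$, and recognizing the resulting sum $\sum\sgn(c)\,cr$ as $c_U r_U=\varepsilon_U$ acting on $g$.

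The full identity then follows by induction on $m$. Peel off the innermost factor via the single-block input, $\sigma_{(T^m)'}(zf)=z\,\varepsilon_{T^m}(f)$, commute $\varepsilon_{T^m}$ (which only touches block $m$) past the remaining outer $\sigma_{(T^i)'}$'s, peel off $\sigma_{(T^{m-1})'}(z\,\cdot)=z\,\varepsilon_{T^{m-1}}(\cdot)$, and continue, arriving at $z\,\varepsilon_{T^1}\cdots\varepsilon_{T^m}(f)=z\,\varepsilon_T(f)$.

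The only nonroutine step is verifying $\pi(z)=\sgn(\pi)\,z$ for $\pi$ in a Young-subgroup factor $S_{n_i}\subseteq S_n$, rather than for $\pi$ ranging over all of $S_n$. This holds because the cross-block factors $x_a-x_b$ with $a,b$ in different blocks are permuted among themselves with no sign contribution by any $\pi\in S_{n_i}$, so only the block-$i$ Vandermonde factor of $z$ contributes, and its sign under $\pi$ is exactly $\sgn(\pi)$ (the sign of $\pi$ viewed either in $S_{n_i}$ or in $S_n$). Once this sign observation is recorded, the rest is bookkeeping about which operators commute with which polynomial factors, and no further difficulty arises.
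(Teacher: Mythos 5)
You correctly diagnosed the degree mismatch and concluded the statement is typo'd; this is accurate. Your reformulation $z\,\varepsilon_T(f)=\sigma_{T'}(zf)$ is exactly the identity that the paper's own (also somewhat garbled) proof chain establishes — it runs from $z\varepsilon_T(f)$ to $\sigma_{(T^1)'}\cdots\sigma_{(T^m)'}(zf)$ — and your single-block-identity-plus-iteration argument is the same strategy the paper uses. You were also right to isolate and verify the sign observation $\pi(z)=\sgn(\pi)z$ for $\pi$ ranging only over a block subgroup $S_{n_i}\subseteq S_n$; the paper leaves this implicit, and it is the one point where the $m$-tuple setting genuinely differs from the single-tableau lemma.

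One small remark worth recording: the variant actually \emph{used} downstream, in the proof of Theorem~\ref{Prodsub}, is $\varepsilon_T(zg)=z\,\sigma_{T'}(g)$ (applied as $\e_{T_j'}(zH_{T_k}^P)=z\,\s_{T_j}(H_{T_k}^P)$), which is the direct $m$-tuple analogue of the preceding single-tableau lemma, not the $\varepsilon\!\leftrightarrow\!\sigma$-swapped version you chose. As you yourself observe, these two are genuinely distinct identities. Fortunately both follow from literally the same computation: the single-block version of $\varepsilon_U(zg)=z\sigma_{U'}(g)$ is obtained by writing $\varepsilon_U=\sum_{c,r}\sgn(c)\,cr$, pulling $z$ out with the same $\pi(z)=\sgn(\pi)z$ observation, and recognizing $\sum_{c,r}\sgn(r)\,cr=r_{U'}c_{U'}=\sigma_{U'}$, and then the block-by-block peeling proceeds identically. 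So the content of your proof is fine; if you want it to directly serve Theorem~\ref{Prodsub}, restate it as $\varepsilon_T(zf)=z\,\sigma_{T'}(f)$ and run the same argument.
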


\begin{proof}
Let $f \in S$ then
$z\varepsilon_T(f)=z\varepsilon_{T^1}\dots\varepsilon_{T^m}(f)=\varepsilon_{T^1}\dots\varepsilon_{T^{m-1}}z\sigma_{(T^m)'}(f)$  $=\sigma_{(T^1)'}\dots\sigma_{(T^m)'}(zf)$.
\end{proof}
\begin{defn}
Let $\lambda$ be an $m$-tuple of Young diagrams of size $(n_1,\dots,n_m)$ and let $T \in \NST(\lambda)$. Let $M_T= \langle H^S_T \,|\, S \in \ST(\lambda) \rangle$ and $N_T=\langle\{F_{T}^S \,|\, S \in \ST(\lambda') \}\rangle$ be $R$-modules.
\end{defn}

\begin{remark}
These are analogous modules to the ones defined in Definition \ref{def:freemods} and are free $R$-submodules of $S/(R_+)$, but are not irreducible representations of $S_n$ .
\end{remark} 

\begin{thm}\label{Prodsub} For the discriminant $\Delta$ of $S_n$, the matrix factorization defined by the reduced hyperplane arrangement, $(z,z)$, can be decomposed in the following way:

\[(z,z) = \bigoplus_{\lambda \vdash (n_1,\ldots, n_m)} \bigoplus_{T\in \NST(\lambda)} (z|_{M_{T}},z|_{N_{T'}}).\]
 and $(z|_{M_{T}},z|_{N_{T'}})$ are the matrix factorizations:

  \begin{center}
    \begin{tikzcd}
   M_{T} \arrow[r, "z|_{M_{T}}"] & N_{T'} \arrow[r, "z|_{N_{T'}}"] & M_{T}
    \end{tikzcd}
    \end{center}

\end{thm}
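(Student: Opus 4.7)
The plan is to adapt the proof of Theorem \ref{Big} to the Young subgroup setting, making use of the three ingredients that have just been introduced for $m$-tableaux: the Ariki--Terasoma--Yamada basis $\{F_T^S : T\in\NST(\lambda),\, S\in\ST(\lambda)\}$ of $S/(R_+)$, the orthogonality lemma $\varepsilon_{T_1}\varepsilon_{T_2}=0$ for $T_1<T_2$ in $\NST(n_1,\ldots,n_m)$, and Lemma \ref{Lem:commute} which allows one to commute $z$ past $\varepsilon_T$ at the cost of replacing $\varepsilon_T$ with $\sigma_{T'}$. First I would observe that since $z$ is the relative invariant for the determinant character of $S_n$, and therefore also for the Young subgroup, multiplication by $z$ sends the $\lambda$-isotypical component $S_\lambda$ of $S$ (as a module over $S_{n_1}\times\cdots\times S_{n_m}$) to the $\lambda'$-isotypical component $S_{\lambda'}$. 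This immediately gives the coarse decomposition
\[(z,z)=\bigoplus_{\lambda\vdash(n_1,\ldots,n_m)}(z|_{S_\lambda},z|_{S_{\lambda'}}).\]

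Next, within each isotypical component, I would fix $T\in\NST(\lambda)$ and expand an arbitrary element $zH_T^P\in S_{\lambda'}$ in the $F$-basis: $zH_T^P=\sum_{U\in\NST(\lambda'),\,W\in\ST(\lambda')}g_{U,T}^{P,W}F_U^W$ with coefficients in $R$. Ordering $\NST(\lambda')=\{T_1',\ldots,T_k'\}$ by the (now total) last letter order, and applying $\varepsilon_{T_j'}$ successively starting from the minimum, the orthogonality lemma $\varepsilon_{T_j'}\varepsilon_{T_i'}=0$ for $j<i$ kills all terms $F_U^W$ with $U\neq T_j'$ on the right-hand side. On the left-hand side, Lemma \ref{Lem:commute} lets me rewrite $\varepsilon_{T_j'}(zH_T^P)=\varepsilon_{T_j'}(z\sigma_T(x_T^P))$ and, by an analogous orthogonality for the $\sigma$'s, show that this vanishes unless $T_j'=T'$. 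The combination isolates the contribution of $T'$ and shows $zH_T^P\in\langle F_{T'}^W: W\in\ST(\lambda')\rangle=N_{T'}$.

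Running the symmetric argument with the roles of $H$ and $F$ (equivalently $\varepsilon$ and $\sigma$, $T$ and $T'$) exchanged gives $z\cdot N_{T'}\subseteq M_T$. Hence the restriction of $(z,z)$ to $M_T\oplus N_{T'}$ is itself a matrix factorization of $\Delta$, since the composition $z^2=\Delta$ acts by $\Delta\cdot\mathrm{id}$ on $S$ and therefore on each summand. Assembling over all $\lambda\vdash(n_1,\ldots,n_m)$ and all $T\in\NST(\lambda)$, and using the fact that the $M_T$ for $T\in\NST(\lambda)$ together span $S_\lambda$ (this is the basis statement of Ariki--Terasoma--Yamada applied to the $H$'s, which form an $R$-basis of $S_\lambda$ by the argument of Theorem \ref{Thm:HBasis} since $kS_n\varepsilon_T\cong kS_n\sigma_T$ and natural standard $m$-tableaux index the multiplicity space), produces the claimed direct sum decomposition.

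The main obstacle will be verifying that the orthogonality step carries over cleanly from the single-partition setting. Unlike the $S_n$ case, the conjugation $T\mapsto T'$ acts separately on each component, so I must confirm that the induced ordering on $\NST(\lambda')$ is compatible with the last letter order on $\NST(\lambda)$ in the sense required for the inductive peeling, and that the companion identity $\sigma_{T_2}\sigma_{T_1}=0$ (for the reversed inequality) holds in the product setting by factorwise application of Lemma \ref{Zeroing}. Once this compatibility is in place, the rest of the proof is a direct transcription of the computation in Theorem \ref{Big}.
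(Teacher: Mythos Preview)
Your approach is correct and follows essentially the same strategy as the paper's proof, with one structural difference worth noting: the paper does \emph{not} first pass to isotypical components $S_\lambda$. Instead it orders all of $\NST(n_1,\ldots,n_m)$ at once (across shapes), writes $zH_{T_k}^P=\sum_i f_{T_i'}$ with $f_{T_i'}\in N_{T_i'}$, and then runs an explicit induction on $j$: applying $\varepsilon_{T_j'}$ and Lemma~\ref{Lem:commute} gives $z\sigma_{T_j}H_{T_k}^P$ on the left, which vanishes for $j<k$ by the $\sigma$-orthogonality, forcing $f_{T_j'}=0$; then $\varepsilon_{T_k'}$ kills the remaining $f_{T_j'}$ for $j>k$. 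Your plan to restrict to $S_\lambda$ first is a legitimate simplification since idempotents of different shapes annihilate each other in both orders, so nothing is lost.

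One imprecision to tighten: you write that the left-hand side ``vanishes unless $T_j'=T'$'' after applying $\varepsilon_{T_j'}$, but the $\sigma$-orthogonality $\sigma_{T_2}\sigma_{T_1}=0$ is only one-sided. You therefore cannot isolate $T'$ in a single stroke; you really do need the inductive peeling (which your word ``successively'' hints at but does not spell out). The paper makes this induction explicit. Finally, your identified obstacle about the compatibility of conjugation with the last-letter order is resolved exactly as you suspect: for $T_1,T_2\in\NST(\lambda)$ of the \emph{same} shape $\lambda$, both are natural, so the largest differing entry lies in the same component tableau and conjugation reverses the order there (as in Lemma~\ref{Zeroing}); across different shapes the products of idempotents vanish in both directions anyway.
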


\begin{proof}
\sloppy{Recall that we can order all of the standard tableaux with $n$ cells $\NST (n_1,\dots,n_m) $ such that if $i < j$ then} $T_i>T_j$, thus $\e_{T_j}\e_{T_i} = \s_{T_i}\s_{T_j} = 0$. Let $d$ to be the size of  $\NST(n_1,\ldots,n_m)$, we can write $S = \oplus_{1\leq i\leq d} M_{T_i} = \oplus_{1\leq i\leq d} N_{T_i}$. It is clear that since for a $m$-tableau $T$ $\varepsilon_{T}$ and $\sigma_{T}$ are idempotent  if $i<j$ then $\e_{T_i}F_{T_j} = 0$ and $\s_{T_j}H_{T_i} = 0$ . Therefore consider $1\leq k\leq d$, let $P$ be a standard tableaux of the same shape as $T_k$. Then we can split $zH_{T_k'}^P$ into the different components of $S$, where each $f_{T_i'} \in N_{T_i'}$. Calculate
\begin{align}\label{predec}
    zH_{T_k}^P = f_{T_1'} + f_{T_2'} + \cdots + f_{T_k'} + \cdots + f_{T_d'} \ . 
\end{align}
Claim: For each $1 \leq j < k$, each component $f_{T_j'} = 0$.

We prove the claim by induction. Let $j=1$, then since $1<k$ then $T_k<T_1$ thus $\s_{T_1}H_{T_k}^P = 0$.
    \[\e_{T_1'}zH_{T_k}^P = \e_{T_1'}(f_{T_1'} +\cdots +f_{T_d'})\]
    From Lemma \ref{Lem:commute} we have
   \begin{align*} z(\s_{T_1}H_{T_k}^P) &= \e_{T_1'}f_{T_1'} + \cdots +\e_{T_1'}f_{T_d'}\\
    0 &= f_{T_1'}
\end{align*}
Assume that the claim is true for $j-1$. Since $j<k$ then $T_j > T_k$ thus $\s_{T_j}H_T^P = 0$. Therefore we have the following computation, using again Lemma \ref{Lem:commute}
\begin{align*}
    \e_{T_j'}zH_{T_k}^P &= \e_{T_j'}(f_{T_1'} + \cdots + f_{T_j'}+\cdots +f_{T_d'})\\
    z(\s_{T_j}H_{T_k}^P) &= \e_{T_j'}f_{T_1'} + \cdots + \e_{T_j'}f_{T_j'}+\cdots +\e_{T_j'}f_{T_d'}\\
    0 &= f_{T_j'}
\end{align*}
Therefore equation \eqref{predec} reduces to
\begin{align} \label{eq:3}
    zH_{T_k}^S = f_{T_k'} + f_{T_{k+1}'} + \cdots + f_{T_d'} .
\end{align}
 After applying $\e_{T_k'}$ to \eqref{eq:3}, the left hand side becomes $\e_{T_k'}(zH_{T_{k}}^P) = z(\s_{T_k}H_{T_k}^P) = zH_{T_k}^P$. If $j>k$, then $\e_{T_k'}f_{T_j'} = 0$, thus $zH_{T_k}^K = f_{T_k'}$. In other words $z|_{M_{T}}:M_{T} \to N_{T'}$. A similar argument can be made about $zF_{T_i}^P$ thus proving the statement. \\
\\
The argument above shows that for a $T \in \NST(n_1,...,n_m)$, $\text{Im}(z|_{M_T}) = N_{T'}$, similarly one could show that $\text{Im}(z|_{N_{T'}}) =M_T$. Therefore the matrix factorization splits as

\[(z,z) = \oplus_{T \in NST(n_1,\ldots,n_m)}(z|_{H_T}, z|_{F_{T'}}).\]
\end{proof}

\begin{example}

Consider the Young subgroup $S_1\times S_2$ inside $S_3$ and let $\sigma_1,\sigma_2,\sigma_3$ be the invariants of the $S_3$ action, then the matrix representing the multiplication by $z$ is given by;

\[\begin{bmatrix}
0 & B \\ A & 0
\end{bmatrix}\]
where $A$ is the $3\times 3$ matrix:

\[A = \begin{bmatrix}
-2\sigma_{2} & -2\sigma_{1}\sigma_{3} &-6\sigma_{3}\\ 
4\sigma_{1} & \sigma_{1}\sigma_{2}+3\sigma_{3}& 4\sigma_{2} \\
-6 &-2\sigma_{2} &-  2\sigma_{1}
\end{bmatrix}\]

$B$ is the $3 \times 3$ matrix:

\[B = \begin{bmatrix}
-\frac{1}{2}\sigma_{1}^{2}\sigma_{2}+2\sigma_{2}^{2}-\frac{3}{2}\sigma_{1}\sigma_{3} &-\sigma_{1}^{2}\sigma_{3}+3\sigma_{2}\sigma_{3}&-\frac{1}{2}\sigma_{1}\sigma_{2}\sigma_{3}+\frac{9}{2}\,\sigma_{3}^{2}\\ 
2\sigma_{1}^{2}-6\sigma_{2}&\sigma_{1}\sigma_{2}-9\sigma_{3}&2\sigma_{2}^{2}-6\sigma_{1}\sigma_{3} \\
-\frac{1}{2}\sigma_{1}\sigma_{2}+\frac{9}{2}\sigma_{3}& -\sigma_{2}^{2}+3\sigma_{1}\sigma_{3} &-\frac{1}{2}\,\sigma_{1}\sigma_{2}^{2}+2\sigma_{1}^{2}\sigma_{3}-\frac{3}{2}\sigma_{2}\sigma_{3}
\end{bmatrix}\]

$A$ is the matrix of $(z|_{H_T},z|_{F_{T'}})$ where;

\[ T= \left(\begin{ytableau}
1
\end{ytableau},\begin{ytableau}
2 & 3
\end{ytableau}\right)
\hspace{2cm}\left(\begin{ytableau}
1
\end{ytableau},\begin{ytableau}
2 \\ 3
\end{ytableau}\right)\] of $\NST(1,2)$.
\end{example}

\begin{remark}
The matrix factorizations from Theorem \ref{Big} and \ref{Prodsub} are equivalent as matrix factorizations, since they are matrices that describe the same map. We get from one to the other by a change of basis of $S/(R_+)$.
\end{remark}

\def\cprime{$'$}

\end{document}